\documentclass[11pt,a4paper]{amsart}

\usepackage[T1]{fontenc}
\usepackage[utf8]{inputenc}
\usepackage{lmodern}
\usepackage{microtype}
\usepackage[a4paper,margin=1in]{geometry}
\usepackage{amsmath,amssymb,mathtools}
\usepackage{enumitem}
\usepackage{xcolor}
\usepackage{graphicx}
\usepackage{float}
\usepackage{hyperref}
\usepackage[nameinlink,capitalize,noabbrev]{cleveref}

\hypersetup{
  colorlinks=true,
  linkcolor=blue!45!black,
  citecolor=blue!45!black,
  urlcolor=blue!45!black,
  pdftitle={Quadratic Scalar Curvature Decay and Uniform Positivity in Dimensions Four Through Seven},
  pdfauthor={Zhehui Wang and Jintian Zhu}
}

\setlist[enumerate,1]{
  label=(\arabic*),
  leftmargin=2.2em,
  itemsep=0.25em,
  topsep=0.35em
}
\setlist[itemize]{
  leftmargin=2em,
  itemsep=0.2em,
  topsep=0.35em
}

\newtheorem{theorem}{Theorem}[section]
\newtheorem{conjecture}[theorem]{Conjecture}
\newtheorem{corollary}[theorem]{Corollary}
\newtheorem{proposition}[theorem]{Proposition}
\newtheorem{lemma}[theorem]{Lemma}
\newtheorem{definition}[theorem]{Definition}
\newtheorem{remark}[theorem]{Remark}
\newtheorem{question}[theorem]{Question}

\crefname{conjecture}{conjecture}{conjectures}
\Crefname{conjecture}{Conjecture}{Conjectures}
\crefname{question}{question}{questions}
\Crefname{question}{Question}{Questions}

\newcommand{\diver}{\operatorname{div}}
\newcommand{\sign}{\operatorname{sign}}
\newcommand{\dist}{\operatorname{dist}}
\newcommand{\spt}{\operatorname{spt}}
\newcommand{\Int}{\operatorname{Int}}

\title[Quadratic scalar curvature decay]
{Quadratic Scalar Curvature Decay and Uniform Positivity
in Dimensions Four through Seven}
\author{Zhehui Wang}
\address{School of Sciences, Great Bay University, Dongguan 523000, China}
\email{wangzhehui@gbu.edu.cn}
\author{Jintian Zhu}
\address{Institute for Theoretical Sciences, Westlake University, Hangzhou 310030, China}
\email{zhujintian@westlake.edu.cn}
\keywords{positive scalar curvature, $\mu$-bubble, relative Yamabe invariant}
\date{}

\begin{document}

\makeatletter
\tagsleft@false
\let\veqno\@@eqno
\makeatother

\begin{abstract}
Let $4\le n\le7$ and let $(M^n,g)$ be a complete, connected,
orientable, noncompact Riemannian manifold of positive scalar curvature.
We prove that if the asymptotic quadratic scalar curvature coefficient of
$g$ is greater than $(n-1)/n$, then $M$ carries a complete smooth
metric whose scalar curvature is at least one.  The threshold $(n-1)/n$
and the strict inequality are optimal. This confirms the second part of Gromov's critical rate of decay conjecture in dimensions four through seven. 
\end{abstract}

\maketitle

\section{Introduction}

On a closed manifold, a complete metric of positive scalar curvature automatically has
scalar curvature bounded away from zero.  On an open manifold, however, positivity
and uniform positivity are genuinely different: a complete metric may have positive scalar curvature while the infimum of the scalar curvature is zero. This leads to the following question.

\begin{question}\label{ques:intro-decay}
How slowly can the scalar curvature of a complete positive-scalar-curvature
metric decay before the underlying manifold must admit a complete metric
with uniformly positive scalar curvature?
\end{question}

Let $(M^n,g)$ be a complete, connected, orientable, noncompact Riemannian manifold
with positive scalar curvature.  Fix a base point $p\in M$ and write
$$
  r_p(x)=d_g(p,x).
$$
Define the
\emph{asymptotic quadratic scalar curvature coefficient} by
\begin{equation}\label{eq:intro-coefficient}
  \mathcal C_{\infty,p}(g)
  :=\liminf_{r_p(x)\to\infty}r_p(x)^2R_g(x).
\end{equation}
The coefficient is independent of the choice of $p$, so we omit $p$
from the notation.

For every threshold $T\ge0$, the condition
$\mathcal C_\infty(g)>T$ is equivalent to the existence of a constant
$C>T$ and a compact set $K\Subset M$ such that
\begin{equation}\label{eq:intro-decay}
  R_g(x)>\frac{C}{r_p(x)^2},
  \qquad \forall x\in M\setminus K.
\end{equation}

We recall Gromov's critical rate of decay conjecture
\cite[Section~3.6.1]{Gromov2023}, which is reformulated as follows.

\begin{conjecture}[Gromov]
\label{conj:gromov-critical}
For every $n\ge3$, there exists a dimensional constant $C_n>0$ with the
following property.  Let $M$ be a connected orientable noncompact
$n$-manifold that admits a complete metric of positive scalar curvature.
\begin{enumerate}
\item[(G1)] For every $0<C<C_n$, the manifold $M$ admits a complete metric
  $g$ of positive scalar curvature such that
  $$
    \mathcal C_\infty
(g)>C.  $$
\item[(G2)] If $M$ admits a complete metric $g$ of positive scalar curvature
  satisfying
  $
    \mathcal C_\infty(g)>C_n
  $,
  then $M$ admits a complete metric $\bar g$ satisfying
  $$
    \inf_MR_{\bar g}>0.
  $$
\end{enumerate}
\end{conjecture}

Chen proved (G2) in dimension three with the sharp threshold
$C_3=2/3$, and also conjectured that the sharp threshold is $$C_n=\frac{n-1}{n}$$ in general
\cite[Corollary~1.6 and the discussion following Proposition~1.7]{Chen2026}.
Our main theorem verifies this in dimensions
$4\le n\le7$, while (G1) remains open.

\begin{theorem}
\label{thm:intro-main}
Let $4\le n\le7$, and let $(M^n,g)$ be a complete, connected, orientable,
noncompact Riemannian manifold with $R_g>0$.  Suppose that
$$
  \mathcal C_\infty(g)>\frac{n-1}{n}.
$$
Then $M$ admits a complete smooth metric $\bar g$ satisfying
$$
  R_{\bar g}\ge1.
$$
\end{theorem}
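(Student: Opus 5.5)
We follow the strategy behind Chen's three-dimensional argument: show that, outside a compact set, $M$ decomposes into pieces on which we can build uniformly positive scalar curvature with controlled boundary behaviour, and then glue. Concretely, the hypothesis gives $C>(n-1)/n$ and a compact $K$ with $R_g>C r_p^{-2}$ on $M\setminus K$.

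\textbf{Step 1: exhaustion by $\mu$-bubbles.} For a sequence of radii $r_k\to\infty$ growing geometrically (say $r_{k+1}=\lambda r_k$ with $\lambda$ large), we solve a $\mu$-bubble problem in each annulus $A_k=\{r_k\le r_p\le\lambda r_k\}$. The weight is $h=h_k(r_p)$ (after smoothing $r_p$), chosen to solve the ODE
$$\tfrac{n}{n-1}h^2-2|h'|\le C r^{-2}.$$
The relevant model is $h(r)=\frac{\alpha}{r}$. It requires $\frac{n}{n-1}\alpha^2+2\alpha\le C$. The point of the threshold is that for $C>(n-1)/n$ one may instead take $h$ blowing up at both ends of the annulus, $h\sim\cot$-type in $\log r$. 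Writing $t=\log r$, this amounts to choosing $h=\beta(t)/r$ with $\frac{n}{n-1}\beta^2+2\beta'+2\beta\le C$. Completing the square gives $\frac{n}{n-1}(\beta+\frac{n-1}{n})^2+2\beta'\le C+\frac{n-1}{n}$. The left side can blow up in finite $t$-time in both directions exactly when the constant $C-\frac{n-1}{n}$ is positive. This is the precise reason for the sharp threshold. Existence and regularity of the $\mu$-bubble $\Sigma_k\subset A_k$ use $n\le7$. Its stability then gives, via the standard rearrangement with the Gauss equation, an operator $-\Delta_{\Sigma}+\frac12(R_\Sigma-\epsilon_k)$ that is positive with $\epsilon_k\sim c\,r_k^{-2}$. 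Here the positivity constant is strictly positive precisely because $C>(n-1)/n$.

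\textbf{Step 2: uniform positivity on slabs.} We rescale each region $\Omega_k$ between $\Sigma_k$ and $\Sigma_{k+1}$ by $r_k^{-2}$. This produces compact manifolds with boundary on which $R\gtrsim \delta>0$ in the interior, uniformly in $k$, and whose boundaries carry a positive conformal Laplacian (a positive relative Yamabe invariant, matching the keywords). Using the boundary spectral positivity, we conformally deform near $\Sigma_k$ and $\Sigma_{k+1}$. After a warped product collar adjustment, each $\Omega_k$ then carries a metric with $R\ge1$ that is a product $dt^2+g_{\Sigma}$ near the boundary, with $R_{g_\Sigma}$ large on the cross sections. Positivity of $-\Delta+\frac12 R_\Sigma$ lets us first conformally change $\Sigma$ to have positive scalar curvature, since $n-1\ge3$ and the conformal Laplacian has coefficient $\frac{n-3}{4(n-2)}<\frac12$. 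We then use Gromov–Lawson/Miao-type collar gluing.

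\textbf{Step 3: gluing and completeness.} We glue the compact core (which carries any metric with $R\ge1$ after the same boundary treatment) together with the infinite chain of slabs along the product collars. Completeness holds because each slab has length bounded below after rescaling. Noncompact ends that are not captured by a single $\Sigma_k$ are handled by working componentwise, which is where orientability is used.

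\textbf{Main obstacle.} We expect the main obstacle to be Step 2. The difficulty is converting the spectral, warped-sense positivity on the bubble boundaries into genuine product collars with uniformly positive scalar curvature, with constants independent of $k$. This matters especially because $\Sigma_k$ need not be connected or of controlled geometry. We would first try to treat each slab as a manifold with boundary having positive relative Yamabe invariant, and to use a doubling or collar argument that depends only on the spectral constants.
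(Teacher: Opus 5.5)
Your Step~2 has a genuine gap, and it is the heart of the proof. Each slab needs a positive-scalar-curvature metric equal to $dt^2+k$ near its boundary, with $k\in[g|_{\partial W}]$. Such a metric has $R>0$ and $H=0$, so its existence forces the Akutagawa--Botvinnik invariant $Y(W,\partial W;[k])$ to be positive. For the class $[g]$ this amounts to coercivity of
\[
E_g(u)=a_n\int_W|\nabla u|^2\,dV_g+\int_W R_gu^2\,dV_g+2\int_{\partial W}H_gu^2\,dA_g,\qquad a_n=\tfrac{4(n-1)}{n-2}.
\]
This quantity is conformally invariant, so conformally deforming near $\Sigma_k$ cannot create it. Nor does it follow from the two facts you keep, namely interior $R_g>0$ and positivity of the conformal Laplacian of $\Sigma_k$. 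For instance, take $u^{4/(n-2)}g_{\rm flat}$ on $T^n\setminus B_\epsilon$, with $u=C-G$ and $G$ the Green's function of the flat torus. It has $R>0$ and a conformally round boundary. Yet a positive-scalar-curvature filling with product collar would, after a concordance to the round metric and a torpedo cap, give positive scalar curvature on $T^n$.

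The culprit is the boundary term. With respect to the slab's outward normal, the bubble has $H=-h_k$ on the inner interface and $H=h_{k+1}$ on the outer one, and $h$ has no sign along the bubble. So collar gluing of Gromov--Lawson or Miao type, which needs mean convexity or a mean-curvature matching condition, is not available. Your argument never uses that the bubbles are \emph{globally} minimizing; stability alone only yields the intrinsic statement about $\Sigma_k$.

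The missing ingredient, which is the paper's main new idea, is a calibration extracted from one-sided global minimization. Orient $h$ by the slab's outward normal and take a neighborhood $U$ of each boundary component containing $\operatorname{spt}h^-$. On $U$, convex relaxation of the minimization, exact penalization, and the Dirichlet total-variation subgradient theorem give a bounded field $Z$ with $|Z|\le1$, $\operatorname{div}Z\le h$, and normal trace $[Z,\nu_W]=1$ on $\Sigma$. Set $X=-\frac{n-2}{2(n-1)}h^-Z$, extended by zero. The square identity
\[
E_g(u)=a_n\int_W|\nabla u+Xu|^2+\int_W\bigl(R_g+a_n(\operatorname{div}X-|X|^2)\bigr)u^2+\int_{\partial W}\bigl(2H_g-a_n[X,\nu_W]\bigr)u^2
\]
then has interior potential at least $q=R_g+\frac{n}{n-1}h^2-2|\nabla h|>0$ on $\{h<0\}$ and equal to $R_g$ elsewhere. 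Its boundary coefficient is $2h^+\ge0$. Hence $E_g$ is coercive, and Akutagawa--Botvinnik supplies the product collars.

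On the other hand, the uniformity in $k$ that you single out as the main obstacle is unnecessary. The paper rescales each block independently so that $R\ge4$ on it and on its collar cross-sections. It then joins the two different scales of $k_i$ across $\Sigma_i$ by warped necks $ds^2+w(s)^2k_i$, with length chosen large in terms of the logarithm of the scale ratio. Your Step~3 does not address this scale mismatch. The necks also give completeness.

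Finally, Step~1 has sign slips. The requirement is $2|\nabla h|-\frac{n}{n-1}h^2<R_g$, and completing the square yields $C-\frac{n-1}{n}$ on the right, not $C+\frac{n-1}{n}$. The threshold you state is nonetheless correct.
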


\begin{remark}
Chen's warped-product example on $\mathbb R^2\times T^{n-2}$ \cite[Section~2]{Chen2026}, together with
the obstruction in
\cite[p.~648]{Gromov2018}, shows that the coefficient $(n-1)/n$ and the
strict inequality in the hypothesis of \cref{thm:intro-main} are both
optimal.
\end{remark}

\subsection*{Background and related work}

Gromov formulated the conjecture in his study of scalar curvature bounds on
open manifolds \cite[Section~3.6.1]{Gromov2023}.  We review below the results
directly concerning quadratic scalar curvature decay and the passage from
positive to uniformly positive scalar curvature.

The three-dimensional background begins with the topological classification
of manifolds with uniformly positive scalar curvature.
Chang--Weinberger--Yu \cite[Theorem~3]{ChangWeinbergerYu2010} proved that
a connected orientable $3$-manifold with finitely generated fundamental
group admitting a complete metric of uniformly positive scalar curvature
is homeomorphic to a possibly infinite connected sum of spherical
$3$-manifolds and copies of $S^2\times S^1$.
Bessi\`eres--Besson--Maillot \cite[Theorem~1.1]{BessieresBessonMaillot2011}
proved the corresponding decomposition for complete metrics of bounded
geometry and uniformly positive scalar curvature using Ricci flow; in this
case, only finitely many diffeomorphism types of spherical summands occur. Later,
Gromov \cite[Section~3.10.2]{Gromov2023} and Wang
\cite[Theorem~1.1]{Wang2023} independently established the decomposition
for every complete connected orientable $3$-manifold of uniformly positive
scalar curvature, without any finite-generation or bounded-geometry
assumption. Their arguments use $\mu$-bubbles to construct compact exhaustions
with spherical boundary; Wang also uses minimal surfaces to identify the
prime factors.

The subsequent decay results retain this topological conclusion while
weakening the curvature hypothesis. Balacheff, Gil Moreno de Mora
Sard\`a, and Sabourau
\cite[Theorem~1.3 and Corollary~1.5]{BalacheffGilMorenoSabourau2025}
proved that a complete connected orientable Riemannian $3$-manifold with
$R_g>0$ and $\mathcal C_\infty(g)>64\pi^2$ has the same connected-sum
decomposition and consequently admits a complete metric of uniformly
positive scalar curvature. Their proof uses fill-radius estimates.
Chen \cite[Theorem~1.5 and Corollary~1.6]{Chen2026} used $\mu$-bubbles to
prove both conclusions under the sharper hypothesis
$\mathcal C_\infty(g)>2/3$, and showed that the threshold $2/3$ is optimal.

Chen's argument also identifies the higher-dimensional coefficient relevant
here.  For $3\le n\le7$, the hypothesis
$$
  \mathcal C_\infty(g)>\frac{n-1}{n}
$$
produces a compact exhaustion whose boundaries are of positive Yamabe type
\cite[Proposition~1.8]{Chen2026}.  This construction uses the
generalized soap bubble method; compare
Chodosh--Li \cite{ChodoshLi2024}.  Recent work on complete uniformly positive
scalar curvature in higher dimensions includes topological obstruction
results of Chodosh--M\'aximo--Mukherjee \cite{ChodoshMaximoMukherjee2024} and Sweeney \cite{Sweeney2026} and construction criteria
of Das \cite{Das2026}.

The conclusion that each interface is of positive Yamabe type concerns only
its intrinsic geometry.  An effective way to prove (G2) of
Gromov's conjecture is to solve the block filling problem: on every
compact cobordism between consecutive interfaces, construct a positive-scalar-curvature
metric that induces the chosen boundary metrics and is a product near the
boundary.  Relative Yamabe theory and conformal cobordism provide the
relevant extension theorem: Akutagawa and Botvinnik \cite{AkutagawaBotvinnik2002a,AkutagawaBotvinnik2002b} showed that positivity of the relative Yamabe invariant with a prescribed positive-scalar-curvature
boundary metric yields an extension that is an exact product near the
boundary.  Related
analytic results on the conformal Laplacian and positive-scalar-curvature
metrics on manifolds with boundary are given by
Rosenberg--Ruberman--Xu \cite{RosenbergRubermanXu2026}.

\subsection*{Outline of the proof}

The proof consists of three steps, illustrated in
\cref{fig:construction-overview}.  First, Chen's globally minimizing
$\mu$-bubbles yield a compact exhaustion $K_i$ and smooth interfaces
$\Sigma_i=\partial K_i$, decomposing $M$ into compact blocks
$$
 W_0=K_1,\qquad
W_i=K_{i+1}\setminus\operatorname{Int}K_i\quad(i\ge1).
$$
The decay hypothesis then yields the positive function $q_i$ defined in \eqref{eq:intro-q}, and stability provides
positive-scalar-curvature metrics $k_i$ on the interfaces.  Second, one-sided calibration fields obtained from the global
minimizing property make the conformal Robin quadratic form on
each $W_i$ coercive.  Relative Yamabe filling
then produces a positive-scalar-curvature metric $G_i$ satisfying
\(G_i=dt^2+k_j\) near each boundary interface \(\Sigma_j\).  Third, the block metrics are rescaled
independently and consecutive product collars are joined by sufficiently
long warped cylinders.  Choosing the neck lengths appropriately gives
$R_{\bar g}\ge1$ and completeness.

\begin{figure}[H]
\centering
\includegraphics[width=\textwidth]{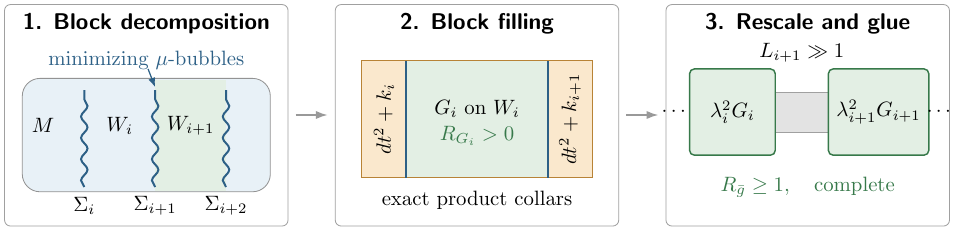}
\caption{The three stages of the construction: block decomposition by
minimizing $\mu$-bubbles, positive-scalar-curvature filling with exact
product collars, and global gluing by rescaling and long warped necks.  The
shapes are schematic.}
\label{fig:construction-overview}
\end{figure}

\subsection*{The block filling argument}

We now explain the block filling argument in detail.
Let $\Omega_i\subset A_i$ be one of Chen's globally minimizing
regions, let $\Sigma_i=\partial\Omega_i\cap A_i^\circ$ be its
$\mu$-bubble boundary, and let $h_i$ be the prescribed function.  The function $h_i$ constructed by Chen satisfies the pointwise
inequality
\begin{equation}\label{eq:intro-q}
  q_i=R_g+\frac{n}{n-1}h_i^2-2|\nabla h_i|>0.
\end{equation}
Stability yields only an intrinsic positive-scalar-curvature metric on
$\Sigma_i$.

To carry out block filling, we establish positivity of the relative
Yamabe invariant of each compact block, with the boundary conformal
class induced by $g$. Since $R_g>0$, this positivity would follow
immediately if the boundary of the block were mean convex with respect
to its outward normal. Indeed, the conformal Robin quadratic form
$$
E_g(u)
=
\frac{4(n-1)}{n-2}\int_W|\nabla u|^2\,dV_g
+\int_W R_gu^2\,dV_g
+2\int_{\partial W}H_gu^2\,dA_g
$$
would then be coercive: the boundary term is nonnegative, while
the interior terms control the $H^1(W)$-norm because $W$ is compact
and $R_g>0$.

For the blocks bounded by $\mu$-bubbles, however, this favorable
sign condition is not automatic.
We handle the negative boundary contributions by a calibration
argument using the global minimizing property of the $\mu$-bubble.
The calibration fields allow us to rewrite $E_g$ using a nonnegative
gradient term involving $|\nabla u+Xu|^2$, for a suitable vector field $X$.
In this identity, the remaining interior coefficient is strictly positive
and the boundary coefficients are nonnegative. This proves coercivity on
each block and hence the required relative Yamabe positivity.

More precisely, fix a compact block $W$ and orient the prescribed
functions $f_\alpha$ along its boundary components $\Sigma_\alpha$
by the outward normal $\nu_W$.
Convex relaxation of one-sided minimization, exact penalization,
and the Dirichlet total variation subgradient theorem produce fields
$Z_\alpha$ with bounded divergence on disjoint compact neighborhoods
$U_\alpha$, satisfying
$$
 |Z_\alpha|\le1,\qquad
 \operatorname{div}Z_\alpha\le f_\alpha
 \quad\text{in }U_\alpha,\qquad
 [Z_\alpha,\nu_W]=1
 \quad\text{on }\Sigma_\alpha,
$$
where brackets denote normal traces.
Write $s^\pm=\max\{\pm s,0\}$ and put
$$
 X_\alpha=-\frac{n-2}{2(n-1)}\,f_\alpha^-Z_\alpha.
$$
Choose $U_\alpha$ to contain $\operatorname{spt}(f_\alpha^-)$,
with $f_\alpha^-=0$ near its artificial boundary.
The zero extensions of $X_\alpha$ therefore give a bounded field
$X=\sum_\alpha X_\alpha$ on $W$ with bounded divergence.
Expanding the square and integrating by parts yields
$$
\begin{aligned}
E_g(u)
={}&\frac{4(n-1)}{n-2}\int_W|\nabla u+Xu|^2\,dV_g\\
&+\int_W\bigl(R_g+\frac{4(n-1)}{n-2}
  (\diver X-|X|^2)\bigr)u^2\,dV_g\\
&+\int_{\partial W}\bigl(2H-\frac{4(n-1)}{n-2}
  [X,\nu]\bigr)u^2\,dA_g.
\end{aligned}
$$
The band inequalities $q_i>0$, together with $R_g>0$, yield
$$
 \int_W
 \bigl(R_g+a_n(\operatorname{div}X-|X|^2)\bigr)u^2\,dV_g
 \ge \delta_W\int_W u^2\,dV_g
$$
for some $\delta_W>0$ and every $u\in H^1(W)$.
On $\Sigma_\alpha$, the normal trace and $H_g=f_\alpha$ give
$$2H_g-\frac{4(n-1)}{n-2}[X,\nu_W]=2f_\alpha^+\ge0.$$
These estimates make $E_g$ coercive.
The critical Sobolev inequality then gives a positive fixed-class
relative Yamabe constant and hence a positive relative Yamabe
invariant on each block.

The principal new ingredient is this calibration argument, which converts
the global minimizing property of Chen's $\mu$-bubbles into coercive
estimates for the conformal Robin quadratic form on both sides of every
interface.
The extension theorem of Akutagawa and Botvinnik
\cite{AkutagawaBotvinnik2002a} then supplies positive-scalar-curvature metrics on the
blocks that equal $dt^2+k_i$ near each interface $\Sigma_i$.

\subsection*{Organization}

Section \ref{sec2} gives the block decomposition using Chen's $\mu$-bubble method,
including the global minimizing property and the precise identification of
the two sides of each interface.  Section \ref{sec3} carries out the block
construction: it derives the BV calibration, proves the conformal Robin and
relative Yamabe estimates, and applies the exact-boundary filling theorem.
Section \ref{sec4} performs the global construction: it rescales the block metrics,
inserts quantitative warped necks, and proves completeness.  Appendix \ref{appendixA}
recalls relative Yamabe theory and the Akutagawa--Botvinnik filling theorem.
Appendix \ref{appendixB} records the Dirichlet total variation subgradient and normal trace
results used in the calibration.

\subsection*{Acknowledgements.}
We thank both Kazuo Akutagawa and Demetre Kazaras for drawing our attention
to an issue in the proof of Lemma~6 in
\cite{AkutagawaBotvinnik2002a}. We are grateful to Kazuo Akutagawa and
Boris Botvinnik for their helpful correspondence, for communicating
their revised statement and proof of the lemma, and for permitting
us to include this material in Appendix~\ref{appendixA}.

\subsection*{Disclosure on AI assistance.} The authors used AI-assisted tools, principally ChatGPT. The authors wrote and verified all theorem statements, proofs, and they take full responsibility for the contents of the paper.
\section{Block decomposition}\label{sec2}

In this section, we use the $\mu$-bubble exhaustion established by Chen
\cite[Proposition~1.8]{Chen2026} to decompose $M$ into compact blocks
separated by smooth hypersurfaces.

For a Caccioppoli set $F$ and an open set $U$, write
$$
 P(F;U)=|D\chi_F|(U),
$$
and denote its reduced boundary by $\partial^*F$.  By a smooth band we mean
a compact smooth codimension-zero submanifold whose
boundary is partitioned into two unions of components.  For an oriented
hypersurface with unit normal $\nu$, our sign convention is
$$
 \mathrm{II}(X,Y)=\langle\nabla_X\nu,Y\rangle,
 \qquad H=\operatorname{tr}\mathrm{II}.
$$

\begin{definition}[Minimizing $\mu$-bubble]
\label{def:minimizing-mu-bubble}
Let $A$ be a compact smooth band with
$\partial A=\partial_{\rm in}A\sqcup\partial_{\rm out}A$, and let
$h\in C^\infty(A^\circ)$. Fix a smooth reference region
$\widehat\Omega\subset A$ which contains a relative collar of
$\partial_{\rm in}A$ and is disjoint from a relative collar of
$\partial_{\rm out}A$. A smooth hypersurface
$\Sigma=\partial\Omega\cap A^\circ$ is a \emph{minimizing $\mu$-bubble
for $h$ in $A$} if $\Omega$ is a smooth global minimizer of
\begin{equation}\label{eq:mu-bubble-functional}
 \mathcal A_h(F)
 =P(F;A^\circ)
  -\int_{A^\circ}(\chi_F-\chi_{\widehat\Omega})h\,dV_g
\end{equation}
among all Caccioppoli sets with
$F\Delta\widehat\Omega\Subset A^\circ$.
In particular, $\Omega$ has the same collar behavior as
$\widehat\Omega$ at both boundary parts of the band.
\end{definition}

The following proposition is extracted from
\cite[Proposition~1.8]{Chen2026}, using
\cite[Proposition~3.1 and Lemmas~3.2--3.4]{Chen2026}.

\begin{proposition}[Chen]
\label{prop:block-decomposition}
Let $4\le n\le7$, and let $(M^n,g)$ be a complete, connected, orientable,
noncompact Riemannian manifold with $R_g>0$ and
$$
  \mathcal C_\infty(g)>\frac{n-1}{n}.
$$
Given a compact set $K_0\Subset M$, there are

\begin{itemize}
\item pairwise disjoint compact smooth bands $A_i\Subset M$;
\item smooth functions $h_i\colon A_i^\circ\to\mathbb R$;
\item smooth Caccioppoli sets $\Omega_i\subset A_i$;
\item smooth embedded closed hypersurfaces
  $$
    \Sigma_i=\partial\Omega_i\cap A_i^\circ;
  $$
\item a compact smooth exhaustion $\{K_i\}_{i\ge1}$ of $M$ by
codimension-zero submanifolds, with
  $$
    K_i\Subset\operatorname{Int}K_{i+1}\quad(i\ge0),
    \qquad M=\bigcup_{i=1}^\infty K_i,
    \qquad \partial K_i=\Sigma_i,
  $$
\end{itemize}

with the following properties.

\begin{enumerate}[label=\textup{(\roman*)},ref=\roman*]
\item\label{item:band-potential}
On $A_i^\circ$,
\begin{equation}\label{eq:band-potential}
  q_i:=R_g+\frac{n}{n-1}h_i^2-2|\nabla h_i|>0.
\end{equation}

\item\label{item:band-collars}
Writing
$\partial A_i=\partial_{\rm in}A_i\sqcup
\partial_{\rm out}A_i$, one has
$$
  h_i\longrightarrow+\infty
  \quad\text{at }\partial_{\rm in}A_i,
  \qquad
  h_i\longrightarrow-\infty
  \quad\text{at }\partial_{\rm out}A_i.
$$
\item\label{item:band-minimizer}
The set $\Omega_i$ is a minimizing region for $h_i$ in $A_i$
in the sense of \cref{def:minimizing-mu-bubble}.

\item\label{item:band-variation}
If $\nu_i$ is the outward unit normal of $\Omega_i$,
then
$$
  H_{\Sigma_i}=h_i
$$
and, for every $\psi\in C^\infty(\Sigma_i)$,
\begin{equation}\label{eq:interface-stability}
  \int_{\Sigma_i}
  \left(|\nabla_{\Sigma_i}\psi|^2
       +\frac12R_{\Sigma_i}\psi^2\right)dA
  \ge
  \frac12\int_{\Sigma_i}q_i\psi^2\,dA.
\end{equation}

\item\label{item:band-normal-orientation}
For every $i$, the outward unit normal of $K_i$ along $\Sigma_i$ is
$\nu_i$.
\end{enumerate}

\noindent
The bands, hypersurfaces, and blocks need not be connected.  Statements
concerning disconnected objects are interpreted componentwise.
\end{proposition}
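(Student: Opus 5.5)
The plan is to reproduce Chen's construction \cite[Proposition~1.8]{Chen2026} and keep track of the extra information the later sections need: the global minimizing property, the orientation, and the pointwise inequality \eqref{eq:band-potential}.

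\emph{Step 1: radial geometry and bands.} Fix $p$ and choose $C>(n-1)/n$ and a radius $r_0$ such that $R_g>C/r_p^2$ outside $B_{r_0}(p)$, as in \eqref{eq:intro-decay}. Smooth $r_p$ to a proper function $\rho$ with $|\nabla\rho|\le1+\epsilon$ and $|\rho-r_p|\le1$. Choose radii $a_i<b_i$ with $b_i/a_i$ a large fixed ratio, with $a_{i+1}>b_i$, and with $a_1$ so large that $K_0\subset\{\rho<a_1\}$. Using regular values, set
\[
A_i=\{a_i\le\rho\le b_i\},
\]
with $\partial_{\rm in}A_i=\{\rho=a_i\}$ and $\partial_{\rm out}A_i=\{\rho=b_i\}$. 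On $A_i$, take Chen's prescribed function
\[
h_i=\phi_i(\rho),
\]
where $\phi_i$ solves, up to a small slack, the ODE $\frac{n}{n-1}\phi^2-2(1+\epsilon)|\phi'|=-C'/t^2$ with $C'<C$. Concretely, $\phi_i$ is a cotangent-type profile in $\log t$ that goes to $+\infty$ at $a_i$ and to $-\infty$ at $b_i$; this is possible exactly when $C'>(n-1)/n$ and the ratio $b_i/a_i$ is large. This gives \eqref{eq:band-potential} and item (\ref{item:band-collars}). The verification is Chen's Lemmas~3.2--3.4 and I would only cite it.

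\emph{Step 2: existence and regularity of minimizers.} Minimize $\mathcal A_h$ among Caccioppoli sets with $F\Delta\widehat\Omega\Subset A_i^\circ$. The infinite values of $h_i$ at the two ends act as barriers: by comparison with level sets of $\rho$ near $\partial A_i$, which are mean convex relative to $h_i$ there, a minimizer stays a fixed distance away from $\partial A_i$. Hence the constraint never becomes active and $\Omega_i$ is a genuine global minimizer, which is item (\ref{item:band-minimizer}). Since $n\le7$, regularity theory for almost-minimizing boundaries makes $\Sigma_i$ smooth. The first variation gives $H_{\Sigma_i}=h_i$ with the outward normal $\nu_i$ of $\Omega_i$. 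For the second variation, take $\psi$ as a normal speed, use Gauss--Codazzi in the form
\[
R_g=R_\Sigma+2\,\mathrm{Ric}(\nu,\nu)+|\mathrm{II}|^2-H^2,
\]
together with $|\mathrm{II}|^2\ge H^2/(n-1)$ and $\partial_\nu h\ge-|\nabla h|$. This yields \eqref{eq:interface-stability} with $q_i$.

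\emph{Step 3: exhaustion.} Define $K_i=\{\rho\le a_i\}\cup\Omega_i$, where $\Omega_i$ contains the inner collar of $A_i$ and misses the outer collar. Then $\partial K_i=\Sigma_i$, and its outward normal is $\nu_i$, which gives item (\ref{item:band-normal-orientation}). Because the bands are disjoint and $a_{i+1}>b_i$, we get $K_i\subset\{\rho<b_i\}\Subset\operatorname{Int}K_{i+1}$. Since $a_i\to\infty$, the $K_i$ exhaust $M$. Components are handled componentwise.

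\emph{Main obstacle.} The hardest point is Step 2: showing that the minimizer is \emph{global}, i.e.\ that the constraint near $\partial A_i$ does not bind, using only the asymptotic behavior of $h_i$ at the two boundary parts. I would first try the standard cut-and-paste barrier argument. Replacing $F$ by $F\cup\{\rho<a_i+\delta\}$ decreases $\mathcal A_h$ whenever $h_i>H_{\{\rho=t\}}$ on that collar, and this holds because $h_i\to+\infty$ while the mean curvatures of the level sets stay bounded. The symmetric replacement handles the outer end.
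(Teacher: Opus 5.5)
Your proposal is correct and follows essentially the same route as the paper's sketch. Both use Greene--Wu smoothing of $r_p$ and place the bands on pole intervals of a scale-invariant Riccati profile; the paper's profile is the explicit solution $f(s)=\frac{2(n-1)}{ns}\bigl(-\frac12+\mu\tan(\mu\log s)\bigr)$ of $2f'-\frac{n}{n-1}f^2=C_*/s^2$, with $h_i=-f(\rho/(1+\varepsilon))$. Both then invoke existence and $n\le7$ regularity of the global $\mu$-bubble minimizer, the traced Gauss equation with $|\mathrm{II}|^2\ge h^2/(n-1)$ and $\nu(h)\ge-|\nabla h|$, and the exhaustion $K_i=\{\rho<a_i\}\cup\Omega_i$.

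The only real difference is that you sketch the barrier (cut-and-paste) argument for global minimality, which the paper simply cites from \cite[Proposition~3.1]{Chen2026} and \cite[Proposition~2.1]{Zhu2021}. One harmless correction: with the factor $1+\epsilon$ in your ODE, the no-fixed-point condition is $C'>(n-1)(1+\epsilon)^2/n$ rather than $C'>(n-1)/n$, which is fine once $\epsilon$ is small.
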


\begin{proof}
For the reader's convenience, we briefly sketch the construction here. Choose constants
$\frac{n-1}{n}<C_*<C<\mathcal C_\infty(g)$.  Replacing $K_0$ by a
larger compact set does not weaken the conclusion, so by
\eqref{eq:intro-decay} we may assume
$$
  R_g>\frac{C}{r_p^2}\quad\text{on }M\setminus K_0.
$$
Choose $\varepsilon>0$ sufficiently small and set
$$
  \mu=\frac12\sqrt{\frac{nC_*}{n-1}-1},
  \qquad
  f(s)=\frac{2(n-1)}{ns}
       \left(-\frac12+\mu\tan(\mu\log s)\right).
$$
This profile satisfies
$$
  2f'(s)-\frac{n}{n-1}f(s)^2=\frac{C_*}{s^2}.
$$
Its consecutive pole intervals are $(\alpha_k,\beta_k)$, where
$$
  \alpha_k=\exp\!\left(\mu^{-1}\left(k\pi-\frac\pi2\right)\right),
  \qquad
  \beta_k=\exp\!\left(\mu^{-1}\left(k\pi+\frac\pi2\right)\right).
$$
Following \cite[proof of Proposition~1.8]{Chen2026}, choose an integer $k_*$
so large that
$$
  \alpha_{k_*}>\max_{K_0}r_p+1.
$$
For $i\ge1$, set $k_i=k_*+2(i-1)$ and
$$
  a_i=(1+\varepsilon)\alpha_{k_i},
  \qquad b_i=(1+\varepsilon)\beta_{k_i}.
$$
By the Greene--Wu smoothing theorem
\cite[Proposition~2.1]{GreeneWu}, after a sufficiently small constant
shift, there is a smooth proper function $\rho$ such that
$$
  |\rho-r_p|<\varepsilon,
  \qquad |\nabla\rho|<1+\varepsilon,
$$
and all $a_i$ and $b_i$ are regular values.  The choice of $k_*$ gives
$K_0\Subset\{\rho<a_1\}$.  Define
$$
  A_i=\{a_i\le\rho\le b_i\},
  \qquad
  h_i=-f\!\left(\frac{\rho}{1+\varepsilon}\right)
  \quad\text{on }A_i^\circ.
$$
Set
$$
  \partial_{\rm in}A_i=\{\rho=a_i\},
  \qquad
  \partial_{\rm out}A_i=\{\rho=b_i\}.
$$
Then the $A_i$ are pairwise disjoint compact smooth bands,
$b_i<a_{i+1}$, and $a_i\to\infty$.  The pole limits of $f$ give
$$
  h_i\longrightarrow+\infty\quad\text{at }\partial_{\rm in}A_i,
  \qquad
  h_i\longrightarrow-\infty\quad\text{at }\partial_{\rm out}A_i.
$$
Using the scalar curvature lower bound, the differential equation satisfied
by $f$, the estimates for $\rho$, and the choice of $\varepsilon$, we obtain
$q_i>0$ on $A_i^\circ$.

Choose a regular value $\ell_i\in(a_i,b_i)$ and set
$$
  \widehat\Omega_i=\{a_i\le\rho\le\ell_i\}\subset A_i.
$$
By \cite[Proposition~3.1]{Chen2026}
(see also \cite[Proposition~2.1]{Zhu2021}), there is a global minimizer
$\Omega_i$ with
$$
  \Omega_i\Delta\widehat\Omega_i\Subset A_i^\circ.
$$
For $n\le7$,
$\Sigma_i=\partial\Omega_i\cap A_i^\circ$ is a smooth embedded hypersurface
\cite[Proposition~3.1]{Chen2026}.  We take the corresponding smooth open
representative of $\Omega_i$.
The first and second variation formulas
\cite[Lemmas~3.2--3.4]{Chen2026} give $H_{\Sigma_i}=h_i$ and the stability
inequality.  Combining the latter with the traced Gauss equation and
$$
  |\mathrm{II}_i|^2\ge\frac{h_i^2}{n-1},
  \qquad \nu_i(h_i)\ge-|\nabla h_i|,
$$
yields \eqref{eq:interface-stability}.

Finally, define
$$
  K_i=\overline{\{\rho<a_i\}\cup
  (\Omega_i\cap A_i^\circ)}.
$$
The collar behavior gives $\partial K_i=\Sigma_i$, while properness of
$\rho$, the inequalities $b_i<a_{i+1}$, and $a_i\to\infty$ give compactness,
nesting, and exhaustion.  Near $\Sigma_i$, the chosen open representative
shows that $K_i$ lies on the $\Omega_i$-side of $\Sigma_i$.  Hence its
outward normal along $\Sigma_i$ is $\nu_i$, completing the proof.
\end{proof}

\begin{proposition}
\label{prop:interface-psc}
Let $\gamma_i=g|_{\Sigma_i}$.  The first eigenvalue of the conformal
Laplacian of every connected component of $(\Sigma_i,\gamma_i)$ is positive.
Consequently, one can choose a metric
$$
  k_i\in[\gamma_i]
$$
with $R_{k_i}>0$ on every component.
\end{proposition}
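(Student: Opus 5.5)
We derive positivity of the first conformal Laplacian eigenvalue directly from the stability inequality \eqref{eq:interface-stability}, together with $q_i>0$ from \eqref{eq:band-potential}. Fix a connected component $S$ of $\Sigma_i$. Its dimension is $m=n-1\ge3$, since $n\ge4$. The conformal Laplacian of $(S,\gamma_i)$ is
$$
L_{\gamma_i}=-\frac{4(m-1)}{m-2}\Delta_{\gamma_i}+R_{\gamma_i},
$$
and its first eigenvalue is
$$
\lambda_1=\inf_{\psi\ne0}\frac{\int_S\bigl(\frac{4(m-1)}{m-2}|\nabla\psi|^2+R_{\gamma_i}\psi^2\bigr)dA}{\int_S\psi^2\,dA}.
$$
We have $\frac{4(m-1)}{m-2}=\frac{4(n-2)}{n-3}>2$ for every $n$. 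Multiplying \eqref{eq:interface-stability} by $2$ gives
$$
\int_S\bigl(2|\nabla\psi|^2+R_{\gamma_i}\psi^2\bigr)dA\ge\int_S q_i\psi^2\,dA.
$$
Since the gradient term in the Rayleigh quotient carries the larger coefficient, the numerator is at least $\int_S q_i\psi^2\,dA$.

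Now $S$ is a compact subset of $A_i^\circ$ and $q_i$ is continuous and positive there, so $q_i\ge c_S>0$ on $S$. Hence $\lambda_1\ge c_S>0$. The test functions in \eqref{eq:interface-stability} may be supported on $S$ alone: take $\psi$ smooth on $S$ and extend it by zero to the other components, which are disjoint and closed. So the stability inequality applies componentwise.

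For the second statement, let $\phi>0$ be the first eigenfunction of $L_{\gamma_i}$ on $S$. It is smooth and strictly positive by standard elliptic theory and the maximum principle. Put $k_i=\phi^{4/(m-2)}\gamma_i$ on $S$. The conformal transformation law gives
$$
R_{k_i}=\phi^{-\frac{m+2}{m-2}}L_{\gamma_i}\phi=\lambda_1\phi^{-\frac{4}{m-2}}>0.
$$
Doing this on each component defines $k_i\in[\gamma_i]$ with $R_{k_i}>0$ on every component.

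There is no real obstacle here. The only points needing care are that $m\ge3$, so the conformal Laplacian is defined with the usual exponents, and that the coefficient comparison $\frac{4(m-1)}{m-2}\ge2$ goes in the right direction.
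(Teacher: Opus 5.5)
Your proposal is correct and follows essentially the same route as the paper. It doubles the stability inequality \eqref{eq:interface-stability}, uses $\frac{4(m-1)}{m-2}>2$ to absorb the extra gradient term, and obtains $\lambda_1\ge\min_\Gamma q_i>0$ componentwise. It then conformally rescales by a positive first eigenfunction, giving $R_{k_i}=\lambda_1\phi^{-4/(m-2)}>0$.
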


\begin{proof}
Put $m=n-1\ge3$ and
$$
  L_{\gamma_i}=-\frac{4(m-1)}{m-2}\Delta_{\gamma_i}
  +R_{\gamma_i}.
$$
Fix a connected component $\Gamma\subset\Sigma_i$.  Compactness and
\eqref{eq:band-potential} give
$\delta_{i,\Gamma}:=\min_\Gamma q_i>0$.  For every nonzero
$\psi\in C^\infty(\Gamma)$, \eqref{eq:interface-stability} yields
$$
\begin{aligned}
&\quad\int_\Gamma\left(\frac{4(m-1)}{m-2}|\nabla\psi|^2
+R_{\gamma_i}\psi^2\right)dA\\
=&\,2\int_\Gamma\left(|\nabla\psi|^2
+\frac12R_{\gamma_i}\psi^2\right)dA
+\left(\frac{4(m-1)}{m-2}-2\right)
\int_\Gamma|\nabla\psi|^2\,dA\\
\ge&\,\delta_{i,\Gamma}\int_\Gamma\psi^2\,dA>0.
\end{aligned}
$$
The Rayleigh characterization consequently gives
$$
 \lambda_1(L_{\gamma_i}|_\Gamma)\ge\delta_{i,\Gamma}>0.
$$
Choose a positive first eigenfunction
$u$ on $\Gamma$. 
If $L_{\gamma_i}u=\lambda_1u$, then
$$
  k_i=u^{4/(m-2)}\gamma_i,
  \qquad
  R_{k_i}=u^{-(m+2)/(m-2)}L_{\gamma_i}u
             =\lambda_1u^{-4/(m-2)}>0.
$$
Taking these metrics componentwise gives the asserted metric on $\Sigma_i$.
\end{proof}

\section{Block filling}\label{sec3}

We formulate the filling criterion on a compact manifold with
boundary. First let us specify minimization using competitors that stay inside the
manifold. If $V\subset W$ is a compact smooth neighborhood of a union
$\Sigma$ of boundary components, write
\[
 V\cap\partial W=\Sigma,\qquad
 \partial V=\Sigma\sqcup S,\qquad S\subset\Int W.
\]

\begin{definition}
\label{def:one-sided-bubble}
Let $V$, $\Sigma$, and $S$ be as above, and let $h\in C^\infty(V)$ be
smooth up to the boundary. We say that $\Sigma$ is a \emph{one-sided
minimizing $\mu$-bubble for $h$ in $V$} if $H_g=h$ on $\Sigma$, with
respect to the outward normal of $W$, and
\begin{equation}\label{eq:one-sided-minimization}
  \mathcal A_{h,V}(V)\le \mathcal A_{h,V}(F)
\end{equation}
for every finite-perimeter set $F\subset V$ that agrees with $V$ near
$S$, where
\begin{equation*}
 \mathcal A_{h,V}(F)
 :=P(F;V)
        -\int_V(\chi_F-1)h\,dV_g.
\end{equation*}
\end{definition}

We can establish the following filling proposition.
\begin{proposition}
\label{prop:block-filling}
Let $(W^n,g)$ be a compact oriented Riemannian manifold with smooth
boundary $\partial W=\bigsqcup_{\alpha=1}^m\Sigma_\alpha$, $n\ge3$,
and assume $R_g>0$ on $W$. Fix smooth metrics
\[
 k_\alpha\in[g|_{\Sigma_\alpha}],\qquad R_{k_\alpha}>0.
\]
Suppose that there are pairwise disjoint
compact smooth neighborhoods $V_\alpha\subset W$, with
\[
 V_\alpha\cap\partial W=\Sigma_\alpha,\qquad
 \partial V_\alpha=\Sigma_\alpha\sqcup S_\alpha,
 \qquad S_\alpha\subset\Int W,
\]
and functions $h_\alpha\in C^\infty(V_\alpha)$ such that:
\begin{enumerate}[label=\textup{(\roman*)}]
 \item $\Sigma_\alpha$ is a one-sided minimizing $\mu$-bubble for
 $h_\alpha$ in $V_\alpha$, in the sense of
 \cref{def:one-sided-bubble};
 \item $h_\alpha\ge0$ in a neighborhood of $S_\alpha$;
 \item on $\spt_{V_\alpha}(h_\alpha^-)$, where
 $h_\alpha^-:=\max\{-h_\alpha,0\}$,
 \begin{equation}\label{eq:filling-q}
 q_\alpha:=R_g+\frac{n}{n-1}h_\alpha^2
                   -2|\nabla h_\alpha|>0.
 \end{equation}
\end{enumerate}
Then there is a smooth metric $G$ on $W$ such that
\begin{equation}\label{eq:block-product-conclusion}
 R_G>0,\qquad G=dt^2+k_\alpha
       \quad\text{near every }\Sigma_\alpha.
\end{equation}
\end{proposition}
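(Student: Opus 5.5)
The plan is to reduce \cref{prop:block-filling} to the Akutagawa--Botvinnik filling theorem recalled in Appendix~\ref{appendixA}. That theorem says: if the relative Yamabe constant of $(W,[g],[g|_{\partial W}])$ is positive and each boundary class contains a positive-scalar-curvature metric $k_\alpha$, then there is a metric with $R>0$ that is exactly $dt^2+k_\alpha$ near $\Sigma_\alpha$.

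So it suffices to show that the conformal Robin form
\[
E_g(u)=\frac{4(n-1)}{n-2}\int_W|\nabla u|^2+\int_W R_gu^2+2\int_{\partial W}H_gu^2
\]
is coercive on $H^1(W)$, meaning $E_g(u)\ge c\|u\|_{H^1}^2$. Coercivity, combined with the Sobolev trace and critical Sobolev inequalities on compact $W$, gives a positive first Robin eigenvalue. Hence the fixed-class relative Yamabe constant is positive.

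\textbf{Step 1: calibration fields.} For each $\alpha$ I would turn the one-sided minimization (i) into a calibration. Minimizing $\mathcal A_{h_\alpha,V_\alpha}$ over sets $F\subset V_\alpha$ that agree with $V_\alpha$ near $S_\alpha$ is a total-variation problem with Dirichlet data $1$ on $S_\alpha$. I would relax it convexly to $u\in BV(V_\alpha)$ with $0\le u\le1$. Here $V_\alpha$ itself, i.e.\ $u\equiv1$, is the minimizer, and the perimeter along $\Sigma_\alpha$ is counted as the jump to $0$ across the boundary. Exact penalization lets me drop the box constraint. The subgradient characterization of the Dirichlet total variation (Appendix~\ref{appendixB}) then gives a field $Z_\alpha\in L^\infty$ with
\[
|Z_\alpha|\le1,\qquad \diver Z_\alpha\le h_\alpha \text{ in }V_\alpha,\qquad [Z_\alpha,\nu_W]=1 \text{ on }\Sigma_\alpha,
\]
where $[\cdot,\cdot]$ is the normal trace. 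I expect this to be the main obstacle. The delicate points are getting the sign and inequality direction of the divergence right, matching the normal trace exactly to $1$ on the portion of boundary carrying the free jump, and handling the fact that $h_\alpha$ may be unbounded only in the original bands. Here $h_\alpha$ is smooth on the compact $V_\alpha$, so boundedness is fine.

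\textbf{Step 2: the twisted identity.} Set $a_n=\frac{4(n-1)}{n-2}$ and
\[
X=\sum_\alpha X_\alpha,\qquad X_\alpha=-\frac{n-2}{2(n-1)}h_\alpha^-Z_\alpha,
\]
extended by zero. By (ii), $h_\alpha^-$ vanishes near $S_\alpha$, so $X$ is bounded with bounded distributional divergence on all of $W$. Expanding $|\nabla u+Xu|^2$ and integrating $\int 2u\langle X,\nabla u\rangle$ by parts via the normal-trace Gauss--Green formula gives the identity displayed in the introduction:
\[
E_g(u)=a_n\!\int_W|\nabla u+Xu|^2+\int_W\bigl(R_g+a_n(\diver X-|X|^2)\bigr)u^2+\int_{\partial W}\bigl(2H-a_n[X,\nu]\bigr)u^2 .
\]
On $\Sigma_\alpha$ we have $H=h_\alpha$ and $[X,\nu]=-\frac{n-2}{2(n-1)}h_\alpha^-$, so the boundary coefficient is $2h_\alpha+2h_\alpha^-=2h_\alpha^+\ge0$.

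\textbf{Step 3: the interior coefficient.} Write $f=h_\alpha^-$, so that $\diver X_\alpha=-\frac{n-2}{2(n-1)}\bigl(f\diver Z_\alpha+\langle\nabla f,Z_\alpha\rangle\bigr)$. Using $\diver Z_\alpha\le h_\alpha$ gives $-f\diver Z_\alpha\ge -fh_\alpha=f^2$, and $|Z_\alpha|\le1$ controls the remaining terms, so
\[
a_n(\diver X-|X|^2)\ge 2f^2-2|\nabla f|-\frac{n-2}{n-1}f^2=\frac{n}{n-1}f^2-2|\nabla f|.
\]
On $\spt h_\alpha^-$ this is at least $\frac{n}{n-1}h_\alpha^2-2|\nabla h_\alpha|$, since $|\nabla h^-|\le|\nabla h|$ a.e. The interior coefficient is therefore $\ge q_\alpha>0$ there by (iii). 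Off these supports it equals $R_g>0$. By compactness it is $\ge\delta_W>0$ a.e., although some care is needed with the measure-theoretic boundary of the support.

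\textbf{Step 4: coercivity.} Steps 2 and 3 give $E_g(u)\ge\delta_W\|u\|_{L^2}^2$. To get the gradient term back, I would interpolate: for small $\theta>0$,
\[
E_g\ge\theta E_g+(1-\theta)\delta_W\|u\|_2^2,
\]
and $E_g(u)\ge c\|\nabla u\|^2-C\|u\|_2^2-C\|u\|_{L^2(\partial W)}^2$, where the trace term is absorbed by $\epsilon$-trace inequalities. Together these yield $H^1$-coercivity. The Akutagawa--Botvinnik theorem, applied with the metrics $k_\alpha$, then finishes the proof.
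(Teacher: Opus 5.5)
Your proposal is correct and follows the paper's route: a calibration field from relaxation, exact penalization and the G\'orny--Maz\'on subgradient theorem as in \cref{lem:calibration}, then $X_\alpha=-\frac{n-2}{2(n-1)}h_\alpha^-Z_\alpha$ with the square identity of \cref{lem:square}, and finally the Akutagawa--Botvinnik theorem. The differences are minor. The paper first trims $V_\alpha$ to a neighborhood $U_\alpha$ whose artificial boundary lies in $V_\alpha^\circ$, so that extending $v$ by $1$ makes every level set an admissible competitor; relaxing directly on $V_\alpha$ with Dirichlet data $1$ on $S_\alpha$, as you do, needs a short collar-approximation argument. For coercivity, the paper uses the retained term $a_n\|\nabla u+Xu\|_2^2$ instead of your interpolation with a trace inequality.
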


In the following,
 for a bounded divergence-measure field,
$[Z,\nu]$ denotes its normal trace; the required trace results are
recorded in Appendix~\ref{appendixB}.

\begin{lemma}\label{lem:calibration}
Let $V$, $\Sigma$, and $S$ be as above, let
$h\in C^\infty(V)$, and assume the minimizing inequality \eqref{eq:one-sided-minimization}.
Let $U\subset V$ be a compact smooth neighborhood of $\Sigma$ with
\[
 \partial U=\Sigma\sqcup T,\qquad
 T\subset V^\circ,\qquad U\cap S=\varnothing.
\]
Then there is $Z\in L^\infty(TU)$ satisfying
\[
 |Z|\le1,\qquad \diver Z\in L^\infty(U),\qquad
 \diver Z\le h\ \text{a.e. on }U,\qquad
 [Z,\nu_W]=1\ \text{a.e. on }\Sigma.
\]
\end{lemma}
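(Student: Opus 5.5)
The plan is to obtain $Z$ as a subgradient (dual certificate) of a convex relaxation of the one-sided minimization problem on $U$, following the route announced in the introduction: convex relaxation, exact penalization, and the Dirichlet total variation subgradient theorem of Appendix~\ref{appendixB}.

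\textbf{Step 1: reduce to a problem on $U$.} Any finite-perimeter $F\subset U$ that agrees with $U$ near $T$ extends by $V\setminus U$ to a competitor in $V$ that agrees with $V$ near $S$. Hence \eqref{eq:one-sided-minimization} says that $\chi_U$ minimizes
\[
 F\mapsto P(F;U)-\int_U(\chi_F-1)h\,dV_g
\]
among such $F$. Here the boundary $\Sigma$ is free: $F$ may detach from it, and this is counted through the perimeter relative to $U$ together with the trace.

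We rewrite this in Dirichlet form. Set $w=1-\chi_F$, which vanishes near $T$. The perimeter of $F$ relative to the open set $U^\circ$, plus the mass of the jump of $\chi_F$ across $\Sigma$ seen from outside, equals $|Dw|(U^\circ)+\int_\Sigma |w|\,dA$. This is the total variation of $w$ extended by $1$ outside $\Sigma$, i.e. with Dirichlet datum $1$ on $\Sigma$. We then check that minimality of $\chi_U$ (that is, $w\equiv 0$) is equivalent to
\[
 J(w):=|Dw|(U^\circ)+\int_\Sigma|w-0|\ \text{(relative to datum)}+\int_U h\,w\ \ge J(0),
\]
after accounting for $P(U;U)=0$ versus the area of $\Sigma$. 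This bookkeeping is the delicate point. A cleaner formulation takes datum $g=1$ on $\Sigma$ and $0$ on $T$ for $v=\chi_F$, with functional
\[
 \Phi(v)=|Dv|(U^\circ)+\int_\Sigma|v-1|\,dA-\int_U h v,
\]
minimized at $v=\chi_U\equiv1$.

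\textbf{Step 2: convex relaxation.} By the coarea formula and the layer-cake representation, $\Phi(v)=\int_0^1\Phi(\chi_{\{v>t\}})\,dt$ for $0\le v\le 1$, so $v\equiv1$ minimizes $\Phi$ over $BV(U;[0,1])$ with zero trace on $T$. To remove the constraint $0\le v\le1$, we use exact penalization: add $\Lambda\int_U\bigl((v-1)^+ + v^-\bigr)$ with $\Lambda>\|h\|_\infty$. Truncation does not increase the total variation term or the boundary term, so $v\equiv1$ remains a global minimizer over all of $BV$.

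\textbf{Step 3: subgradient.} Apply the Dirichlet total variation subgradient theorem from Appendix~\ref{appendixB}. The optimality of $v\equiv1$ yields $Z\in L^\infty$ with $|Z|\le1$ and
\[
 -\diver Z = h - \Lambda\xi,\qquad \xi\in\partial\bigl((v-1)^+ + v^-\bigr)\big|_{v=1}=[0,1],
\]
so $\diver Z\le h$ up to sign conventions, and $\diver Z\in L^\infty$. The theorem also gives $[Z,\nu]\in \sign(\text{datum}-v)$ on the Dirichlet part, which is set-valued $[-1,1]$ on $\Sigma$ since $v=1$ there. To pin $[Z,\nu_W]=1$ we use the remaining freedom: testing the Euler--Lagrange relation with $v$ and the boundary term $\int_\Sigma|v-1|$ evaluated via Green's formula forces equality. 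Concretely, pairing with $v\equiv1$ gives $\int_\Sigma[Z,\nu]=\int_\Sigma 1$ combined with $|[Z,\nu]|\le1$.

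\textbf{Main obstacle.} The hard part is this last sign and normalization bookkeeping: making the one-sided functional match exactly the Dirichlet TV functional, so that the subgradient theorem's boundary condition degenerates to $[Z,\nu_W]=1$ a.e. rather than a mere inclusion in $[-1,1]$. If the energy identity does not force equality directly, I would instead penalize the boundary slightly more weakly, replacing $\int_\Sigma|v-1|$ by the exact area relation $H=h$ on $\Sigma$, and use the smooth unit normal extension near $\Sigma$ to patch $Z$ in a thin collar.
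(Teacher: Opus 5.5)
Your overall architecture (coarea relaxation, exact penalization with $\Lambda>\|h\|_\infty$, the G\'orny--Maz\'on subgradient theorem) is the paper's. However, the Dirichlet data you feed into the subgradient theorem are reversed, and the conclusion $[Z,\nu_W]=1$ has to come precisely from those data. Since $P(F;V)$ is taken on the closed set $V$, it charges the jump of $\chi_F$ across $\Sigma$ to the empty exterior of $W$, so $P(V;V)=\operatorname{Area}(\Sigma)+\operatorname{Area}(S)$. Since $F$ agrees with $V$ near $S$, the relaxed $v$ must be extended by $1$ on $V\setminus U$. The coarea formula then gives
\[
 \int_0^1P(\{\widetilde v>t\};V)\,dt=|Dv|(U^\circ)+\int_T|Tv-1|\,dA+\int_\Sigma|Tv-0|\,dA+\operatorname{Area}(S),
\]
so the correct datum is $\xi=0$ on $\Sigma$ and $\xi=1$ on $T$. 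Your ``cleaner formulation'' instead uses datum $1$ on $\Sigma$ and $0$ on $T$. It thereby drops the area of $\Sigma$, and it is inconsistent with your own minimizer: your Step 2 imposes zero trace on $T$, which $v\equiv1$ violates. Your Step 1 had the right object but garbled it. For $w=1-\chi_F$ the boundary terms are $\int_\Sigma|Tw-1|\,dA+\int_T|Tw|\,dA$, minimized at $w\equiv0$, not $\int_\Sigma|w|\,dA$.

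With datum $1$ on $\Sigma$ and $v\equiv1$, the subgradient theorem only gives $[z,\nu_U]\in\sign(0)=[-1,1]$ on $\Sigma$, and your repair cannot work. The identity $\int_U p=-\int_{\partial U}[z,\nu_U]\,dA$ is just the divergence theorem and forces no trace. Concretely, take $W=\bar B_r\subset\mathbb R^n$, $V=\bar B_r\setminus B_{r/2}$, and $h=(n-1)/|x|$. This is one-sided minimizing, calibrated by $x/|x|$. Since $h\ge0$, the choice $s=h/\Lambda$, $p=0$, $z=0$ is an admissible certificate for your functional (even after correcting the datum on $T$ to $1$), and its trace on $\Sigma$ is $0$.

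The fallback does not close the gap either. The divergence of the unit normal extension is the mean curvature of the parallel hypersurfaces, so $\diver Z\le h$ in a collar would require the pointwise inequality $\operatorname{Ric}(\nu,\nu)+|\mathrm{II}|^2+\nu(h)\le0$ on $\Sigma$. Minimality gives this only in integrated (stability) form, and you would additionally have to match normal traces across the patch.

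The missing step is simply the correct datum. With $\xi=0$ on $\Sigma$, the condition $[z,\nu_U]\in\sign(\xi-Tv)=\sign(-1)=\{-1\}$ is single-valued at $v\equiv1$, and $\nu_U=\nu_W$ on $\Sigma$. Setting $Z=-z$ then gives both $\diver Z=p=h-\Lambda s\le h$ and $[Z,\nu_W]=1$. This single global sign flip must serve both conclusions at once, which your ``up to sign conventions'' leaves unchecked.
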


\begin{proof}
The proof is divided into three steps.

\emph{Step 1: relaxation with boundary values.}
On $\partial U$ set $\xi=0$ on $\Sigma$ and $\xi=1$ on $T$, and define
\[
 \mathcal T_\xi(v)
 :=|Dv|(U^\circ)+\int_{\partial U}|Tv-\xi|\,dA_g.
\]
For $v\in BV(U^\circ)$ with $0\le v\le1$, extend $v$ by $1$ on
$V\setminus U$ and denote this extension by $\widetilde v$.
For almost every $t\in(0,1)$, the set
$F_t=\{\widetilde v>t\}$ has finite perimeter and equals $V$ near $S$.
The BV gluing and coarea formulas
\cite[Chapter~3]{AmbrosioFuscoPallara2000} give
\[
\begin{aligned}
\int_0^1 P(F_t;V)\,dt
&=|Dv|(U^\circ)+\int_T|Tv-1|\,dA_g
  +\int_\Sigma Tv\,dA_g+\operatorname{Area}_g(S)\\
&=\mathcal T_\xi(v)+\operatorname{Area}_g(S).
\end{aligned}
\]
Since $\mathcal A_{h,V}(V)=\mathcal T_\xi(1)+\operatorname{Area}_g(S)$,
the fixed artificial-boundary term cancels when we integrate
\eqref{eq:one-sided-minimization} in $t$.
The layer-cake formula then yields
\[
 \mathcal T_\xi(v)-\int_Uhv\,dV_g
 \ge \mathcal T_\xi(1)-\int_Uh\,dV_g.
\]
Thus the constant function $1$ minimizes the relaxed functional
under the constraint $0\le v\le1$.

\smallskip
\emph{Step 2: exact penalization.}
Put $\Phi(s)=\dist(s,[0,1])$ and choose
$\Lambda>\|h\|_{L^\infty(U)}$. On $L^2(U)$ consider
\begin{equation}\label{unconfunc}
 \mathcal F(v)=\mathcal T_\xi(v)-\int_Uhv\,dV_g
                       +\Lambda\int_U\Phi(v)\,dV_g.
\end{equation}
Projection $\Pi(s)=\min\{1,\max\{0,s\}\}$ does not increase the
variation or boundary terms, since $\xi$ takes values in $[0,1]$.
Moreover, $-hv+\Lambda\Phi(v)\ge-h\Pi(v)$ pointwise.
Hence $1$ is an unconstrained minimizer of \eqref{unconfunc}.
The last two terms are continuous convex functions on $L^2(U)$, so
the subdifferential sum rule gives
\begin{equation}\label{subgraeq}
 p-h+\Lambda s=0,\qquad
 p\in\partial_{L^2}\mathcal T_\xi(1),\qquad
 s(x)\in\partial\Phi(1)=[0,1]\ \text{a.e.}
\end{equation}
Consequently $p\in L^\infty(U)$ and $p\le h$.

\smallskip
\emph{Step 3: the boundary trace.}
By \cref{thm:dirichlet-tv}, there is a field $z$ such that
\[
 |z|\le1,\qquad p=-\diver z,\qquad
 [z,\nu_U]\in\sign(\xi-1)\quad\text{on }\partial U.
\]
On $\Sigma$, the last condition reads $[z,\nu_W]=-1$.
Thus $Z=-z$ has the required divergence inequality and outward trace.
\end{proof}

\begin{lemma}\label{lem:robin-filling}
Let $(W^n,g)$ be a compact connected oriented Riemannian manifold with
nonempty smooth boundary, $n\ge3$, and let $k\in[g|_{\partial W}]$ be
smooth with $R_k>0$. Put $a_n=4(n-1)/(n-2)$. Suppose
$X\in L^\infty(TW)$, $\diver X\in L^\infty(W)$, and $\delta>0$
satisfy
\begin{align}
 R_g+a_n(\diver X-|X|^2)&\ge\delta
       &&\text{a.e. on }W,\label{eq:filling-potential}\\
 2H_g-a_n[X,\nu]&\ge0
       &&\text{a.e. on }\partial W.\label{eq:filling-boundary}
\end{align}
Then there is a smooth metric $G$ on $W$ with $R_G>0$ and
$G=dt^2+k$ near $\partial W$.
\end{lemma}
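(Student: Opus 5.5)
The plan is to show that the conformal Robin quadratic form
\[
E_g(u)=a_n\int_W|\nabla u|^2\,dV_g+\int_W R_gu^2\,dV_g+2\int_{\partial W}H_gu^2\,dA_g
\]
is coercive on $H^1(W)$. From coercivity we deduce positivity of the relative Yamabe invariant of $(W,[g],[g|_{\partial W}])$ in the fixed conformal class. Then we invoke the Akutagawa--Botvinnik filling theorem recorded in Appendix~\ref{appendixA}, which, given positive relative Yamabe invariant and a boundary metric $k\in[g|_{\partial W}]$ with $R_k>0$, produces a PSC metric on $W$ that is an exact product $dt^2+k$ near $\partial W$.

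\emph{Step 1: the completed-square identity.} For $u\in C^\infty(W)$, expand
\[
a_n\int_W|\nabla u+Xu|^2=a_n\int_W|\nabla u|^2+a_n\int_W\langle X,\nabla(u^2)\rangle+a_n\int_W|X|^2u^2 .
\]
Then integrate by parts the middle term using the Gauss--Green formula for bounded divergence-measure fields with normal traces (Appendix~\ref{appendixB}):
\[
\int_W\langle X,\nabla(u^2)\rangle=-\int_W(\diver X)u^2+\int_{\partial W}[X,\nu]u^2 .
\]
This gives the identity displayed in the introduction,
\[
E_g(u)=a_n\int_W|\nabla u+Xu|^2+\int_W\bigl(R_g+a_n(\diver X-|X|^2)\bigr)u^2+\int_{\partial W}\bigl(2H_g-a_n[X,\nu]\bigr)u^2 .
\]
By density, and continuity of the trace $H^1\to L^2(\partial W)$ together with $X,\diver X\in L^\infty$, it extends to all $u\in H^1(W)$. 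The hypotheses \eqref{eq:filling-potential} and \eqref{eq:filling-boundary} then yield $E_g(u)\ge\delta\|u\|_{L^2}^2$.

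\emph{Step 2: upgrading to $H^1$-coercivity.} Since $|X|\le C$,
\[
|\nabla u|^2\le 2|\nabla u+Xu|^2+2C^2u^2 .
\]
Combining this with Step 1 (take a convex combination $E=\theta E+(1-\theta)E$, bound one copy by $\delta\|u\|^2$ and the other by the gradient term) gives
\[
E_g(u)\ge c\,\|u\|_{H^1(W)}^2 .
\]
The critical Sobolev inequality on compact $W$ gives $\|u\|_{L^{2n/(n-2)}}\le C_S\|u\|_{H^1}$. Hence the relative Yamabe quotient $E_g(u)/\|u\|_{L^{2n/(n-2)}}^2$ is bounded below by $c/C_S^2>0$. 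The boundary-normalized variants of the quotient are positive by the same argument, using the trace embedding $H^1\to L^{2(n-1)/(n-2)}(\partial W)$. So the relative Yamabe constant of $(W,[g])$ is positive.

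\emph{Step 3: filling.} Apply the Akutagawa--Botvinnik extension theorem in the revised form of Appendix~\ref{appendixA}. Positivity of the relative Yamabe invariant lets us conformally change $g$ to a metric with positive scalar curvature and minimal boundary, or with positive first eigenvalue of the Robin conformal Laplacian. The boundary metric $k\in[g|_{\partial W}]$ with $R_k>0$ then lets us deform near $\partial W$ to an exact product $dt^2+k$ while keeping $R>0$. I expect the main obstacle to be this last step: checking that the hypotheses of the appendix filling theorem match exactly what Step 2 provides, namely positivity in the fixed conformal class rather than the supremum over classes, and prescribed boundary metric $k$ rather than $g|_{\partial W}$. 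The integration by parts with merely $L^\infty$ divergence-measure fields also needs care, but it is routine given Appendix~\ref{appendixB}.
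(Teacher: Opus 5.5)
Your proposal is correct and is essentially the paper's proof: the square identity of \cref{lem:square}, then $H^1$-coercivity from $\|u\|_{H^1}^2\le 2\|\nabla u+Xu\|_2^2+(2C^2+1)\|u\|_2^2$ (your convex-combination argument is an equivalent variant), then the critical Sobolev inequality, and finally \cref{thm:ab-filling}. The step you flag as the main obstacle is immediate: $k\in[g|_{\partial W}]$ means $[g]$ is one of the classes in the supremum defining $Y(W,\partial W;[k])$, so $Y(W,\partial W;[k])\ge Y_{[g]}(W,\partial W;[k])>0$, which is exactly the hypothesis of that theorem, and neither the boundary-normalized quotients nor your own sketch of the boundary deformation is needed.
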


\begin{proof}
The conformal Robin quadratic form is
\[
 E_g(u)=a_n\int_W|\nabla u|^2\,dV_g
       +\int_W R_gu^2\,dV_g
       +2\int_{\partial W}H_gu^2\,dA_g.
\]
By the square identity in \cref{lem:square} and the hypotheses,
\[
 E_g(u)\ge a_n\|\nabla u+Xu\|_2^2+\delta\|u\|_2^2
 \qquad\text{for every }u\in H^1(W).
\]
Writing $C=\|X\|_\infty$, we have
\[
 \|u\|_{H^1(W)}^2
 \le 2\|\nabla u+Xu\|_2^2+(2C^2+1)\|u\|_2^2.
\]
Consequently $E_g(u)\ge c\|u\|_{H^1(W)}^2$, where
\[
 c=\min\left\{\frac{a_n}{2},\frac{\delta}{2C^2+1}\right\}>0.
\]
The critical Sobolev inequality gives
\begin{equation}\label{eq:filling-positive-quotient}
 \inf_{0\ne u\in H^1(W)}
 \frac{E_g(u)}
 {\left(\int_W|u|^{2n/(n-2)}\,dV_g\right)^{(n-2)/n}}>0.
\end{equation}
Since $k\in[g|_{\partial W}]$, this verifies the positivity condition
for the prescribed boundary conformal class in the
Akutagawa--Botvinnik extension theorem \cref{thm:ab-filling}, which gives the required extension
with an exact product near the boundary.
\end{proof}

\begin{proof}[Proof of \cref{prop:block-filling}]
Put $a_n=4(n-1)/(n-2)$ and $b_n=(n-2)/(2(n-1))$.
Fix an index $\alpha$. Since $h_\alpha\ge0$ near the
artificial boundary $S_\alpha$, we can trim a small collar of
$S_\alpha$ to obtain a compact smooth neighborhood
$U_\alpha\subset V_\alpha$ of $\Sigma_\alpha$ such that
\[
 \partial U_\alpha=\Sigma_\alpha\sqcup T_\alpha,
 \qquad U_\alpha\cap S_\alpha=\varnothing,
\]
and $U_\alpha$ contains the support in $V_\alpha$ of
$h_\alpha^-:=\max\{-h_\alpha,0\}$ in its relative interior in $W$.
In particular, $h_\alpha^-$ vanishes near $T_\alpha$.
By \cref{lem:calibration}, there is a field $Z_\alpha$ on $U_\alpha$
with $|Z_\alpha|\le1$, $\diver Z_\alpha\le h_\alpha$, and
$[Z_\alpha,\nu_W]=1$ on $\Sigma_\alpha$.

Define $X_\alpha=-b_nh_\alpha^-Z_\alpha$ on $U_\alpha$ and extend it
by zero to $W$. There is no jump at the artificial boundary because
$h_\alpha^-$ vanishes near $T_\alpha$. Thus
\[
 X:=\sum_{\alpha=1}^m X_\alpha\in L^\infty(TW),
 \qquad \diver X\in L^\infty(W).
\]
The supports of the summands are disjoint. On $\{h_\alpha<0\}$,
we have $X_\alpha=b_nh_\alpha Z_\alpha$, and therefore
\begin{align*}
 \diver X_\alpha
 &=b_n\bigl(\langle\nabla h_\alpha,Z_\alpha\rangle
                    +h_\alpha\diver Z_\alpha\bigr)\\
 &\ge b_n\bigl(-|\nabla h_\alpha|+h_\alpha^2\bigr),
 \qquad |X_\alpha|^2\le b_n^2h_\alpha^2.
\end{align*}
Since $a_nb_n=2$ and $a_n(b_n-b_n^2)=n/(n-1)$,
\begin{equation}\label{eq:block-field-potential}
 R_g+a_n(\diver X_\alpha-|X_\alpha|^2)
 \ge R_g+\frac{n}{n-1}h_\alpha^2-2|\nabla h_\alpha|
 =q_\alpha.
\end{equation}
Elsewhere $X_\alpha$ and its divergence vanish almost everywhere.
Each support $\spt_{V_\alpha}(h_\alpha^-)$ is compact, and
$q_\alpha$ is strictly positive there. Taking the minimum of
$\min_W R_g$ and the positive minima of $q_\alpha$ over the
nonempty supports gives \eqref{eq:filling-potential} for some
$\delta>0$. If all supports are empty, take $\delta=\min_W R_g$.

On each $\Sigma_\alpha$, the equality
$H_g=h_\alpha$ and the trace condition give
\begin{equation}\label{eq:block-field-boundary}
 2H_g-a_n[X,\nu_W]
 =2h_\alpha+2h_\alpha^-=2h_\alpha^+\ge0.
\end{equation}
Apply \cref{lem:robin-filling} on each connected component with
nonempty boundary, using $k=\bigsqcup_\alpha k_\alpha$. On components
without boundary keep $g$. This proves the proposition.
\end{proof}

\begin{corollary}
\label{cor:intrinsic-all-bubbles}
Assume the geometric and minimizing hypotheses of
\cref{prop:block-filling}, with $n\ge4$, without prescribing the
metrics $k_\alpha$. If, in addition, $q_\alpha>0$ on every
$\Sigma_\alpha$, then each induced boundary
conformal class contains a positive-scalar-curvature metric, and any
choice of such metrics $k_\alpha$ extends to a metric $G$ satisfying
\eqref{eq:block-product-conclusion}.
\end{corollary}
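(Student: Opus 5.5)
The plan is to reduce the corollary to two results already available: the stability argument of \cref{prop:interface-psc}, which produces positive scalar curvature metrics in the boundary conformal classes, and \cref{prop:block-filling}, which then does the filling. The only new ingredient is a boundary stability inequality for the one-sided minimizer. So the first step is to derive that inequality on each $\Sigma_\alpha$ from the one-sided minimizing property, and the second is to feed it into the existing arguments.

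\emph{Step 1: first and second variation.} Fix $\alpha$ and $\psi\in C^\infty(\Sigma_\alpha)$ with $\psi\ge0$. Push $\Sigma_\alpha$ inward into $V_\alpha$ along $-\psi\nu_W$. The sets $F_s$ bounded by the pushed surfaces lie in $V_\alpha$ and agree with $V_\alpha$ near $S_\alpha$, so they are admissible competitors in \eqref{eq:one-sided-minimization}.

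\begin{itemize}
\item The first variation at $s=0$ vanishes because $H_g=h_\alpha$ on $\Sigma_\alpha$.
\item Since $s\mapsto \mathcal A_{h_\alpha,V_\alpha}(F_s)$ has a minimum at $s=0$ with zero derivative, its second derivative there is nonnegative. This is the usual stability inequality, valid for nonnegative $\psi$:
\[
\int_{\Sigma_\alpha}\Bigl(|\nabla\psi|^2-(\operatorname{Ric}(\nu,\nu)+|\mathrm{II}|^2+\nu(h_\alpha))\psi^2\Bigr)\,dA\ge0 .
\]
\item Apply the traced Gauss equation, $|\mathrm{II}|^2\ge h_\alpha^2/(n-1)$ and $\nu(h_\alpha)\ge-|\nabla h_\alpha|$, exactly as in Chen's derivation of \eqref{eq:interface-stability}. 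This gives
\[
\int_{\Sigma_\alpha}\Bigl(|\nabla\psi|^2+\tfrac12R_{\Sigma_\alpha}\psi^2\Bigr)\,dA\ge\tfrac12\int_{\Sigma_\alpha}q_\alpha\psi^2\,dA
\]
for all $\psi\ge0$.
\end{itemize}

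\emph{Step 2: positive scalar curvature in the boundary conformal classes.} The Rayleigh quotient of the conformal Laplacian is unchanged under $\psi\mapsto|\psi|$, since $|\psi|\in H^1$ and $|\nabla|\psi||=|\nabla\psi|$ a.e. So the inequality from Step 1 holds for all $\psi$ after approximation. Here $n\ge4$ gives boundary dimension $m=n-1\ge3$, so the conformal Laplacian and the constant $4(m-1)/(m-2)>2$ make sense. The hypothesis $q_\alpha>0$ on the compact set $\Sigma_\alpha$ gives $\min q_\alpha>0$. Repeating the proof of \cref{prop:interface-psc} word for word on each component yields $\lambda_1>0$, and hence a metric $k_\alpha\in[g|_{\Sigma_\alpha}]$ with $R_{k_\alpha}>0$.

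\emph{Step 3: filling.} For any such choice of the $k_\alpha$, all hypotheses of \cref{prop:block-filling} hold, so it produces a metric $G$ satisfying \eqref{eq:block-product-conclusion}.

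The main obstacle is Step 1. Because the bubble lies on the boundary, only one-sided (inward) competitors are available, so the second variation inequality is obtained only for $\psi\ge0$. The reduction to nonnegative test functions in Step 2 is what closes this gap. One also has to check that pushing inward keeps the competitors inside $V_\alpha$ and leaves $S_\alpha$ untouched; this holds for small $s$ because $S_\alpha$ is at positive distance from $\Sigma_\alpha$.
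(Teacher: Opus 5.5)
Your proposal is correct and follows the paper's proof essentially step for step. You use inward variations along $-\psi\nu_W$ to get the one-sided second-variation inequality for $\psi\ge0$, extend it to all $\psi$ via $\psi\mapsto|\psi|$, and apply the traced Gauss equation together with $4(d-1)/(d-2)>2$ for $d=n-1\ge3$ to get positivity of the boundary conformal Laplacian, after which \cref{prop:block-filling} performs the filling. You also make explicit the bound $\nu_W(h_\alpha)\ge-|\nabla h_\alpha|$, which the paper uses only tacitly.
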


\begin{proof}
Fix a connected component $\Gamma$ of $\Sigma_\alpha$ and put
$\gamma=g|_\Gamma$. For a nonnegative smooth function $\psi$, move
$\Gamma$ inward with normal velocity $-\psi\nu_W$, keeping the
region fixed away from $\Gamma$. These are admissible competitors
in \eqref{eq:one-sided-minimization}. The first derivative of the
functional is $\int_\Gamma(h_\alpha-H_g)\psi\,dA_\gamma=0$.
The one-sided minimum therefore gives a nonnegative second derivative:
\[
 Q(\psi):=\int_\Gamma\left(|\nabla_\Gamma\psi|^2
 -(\operatorname{Ric}_g(\nu_W,\nu_W)+|\mathrm{II}|^2
                  +\nu_W(h_\alpha))\psi^2\right)dA_\gamma\ge0.
\]
Approximating $|\psi|$ by nonnegative smooth functions extends this
inequality to arbitrary smooth $\psi$, since $Q(|\psi|)=Q(\psi)$.
The traced Gauss equation, $H_g=h_\alpha$, and
$|\mathrm{II}|^2\ge h_\alpha^2/(n-1)$ now imply
\begin{equation}\label{eq:block-boundary-stability}
 \int_\Gamma\left(2|\nabla_\Gamma\psi|^2
                    +R_\gamma\psi^2\right)dA_\gamma
 \ge\int_\Gamma q_\alpha\psi^2\,dA_\gamma.
\end{equation}
With $d=n-1\ge3$, the coefficient $4(d-1)/(d-2)$ is greater than
$2$. The conformal Laplacian of $(\Gamma,\gamma)$ consequently has
positive first eigenvalue. A positive first eigenfunction gives a
positive-scalar-curvature metric conformal to $\gamma$.
Apply \cref{prop:block-filling} with any chosen metrics in these
positive boundary conformal classes.
\end{proof}

We now apply the intrinsic criterion to the exhaustion from
\cref{prop:block-decomposition}. Put
\[
 W_0=K_1,\qquad W_i=K_{i+1}\setminus\Int K_i\quad(i\ge1).
\]

\begin{corollary}
\label{prop:block-metrics}
Fix the metrics $k_i\in[g|_{\Sigma_i}]$ with $R_{k_i}>0$ from
\cref{prop:interface-psc}. There are smooth metrics $G_i$ with
$R_{G_i}>0$ on $W_i$ such that
\[
 G_0=dt^2+k_1\quad\text{near }\partial W_0,
\]
and, for $i\ge1$,
\[
 G_i=dt^2+k_i\quad\text{near }\Sigma_i,\qquad
 G_i=dt^2+k_{i+1}\quad\text{near }\Sigma_{i+1}.
\]
\end{corollary}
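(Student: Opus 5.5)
The plan is to apply \cref{prop:block-filling} to each block $W_i$, with the metrics $k_i$ from \cref{prop:interface-psc}. We take $g$ restricted to $W_i$; it has $R_g>0$, and $W_i$ is compact. The boundary components of $W_i$ are $\Sigma_i$ (for $i\ge1$) and $\Sigma_{i+1}$. By \cref{prop:block-decomposition}\eqref{item:band-normal-orientation}, the outward normal of $K_{i+1}$ along $\Sigma_{i+1}$ is $\nu_{i+1}$, which is also the outward normal of $W_i$ there. Along $\Sigma_i$ the outward normal of $W_i$ is $-\nu_i$. So we need a one-sided neighborhood $V$ and a function $h$ for each side.

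\emph{Outer side} (interface $\Sigma_{i+1}$ seen from $W_i$). Take $V=\Omega_{i+1}\cap\{\rho\ge c\}$ for a regular value $c$ slightly above $a_{i+1}$, chosen so that $V$ lies in the collar where $h_{i+1}$ is large and positive. Take $h=h_{i+1}$, so $H_g=h$ with respect to $\nu_{i+1}=\nu_W$. For one-sided minimality, let $F\subset V$ agree with $V$ near $S=\{\rho=c\}\cap\Omega_{i+1}$. Then $F\cup(\Omega_{i+1}\setminus V)$ is an admissible competitor for $\mathcal A_{h_{i+1}}$ in $A_{i+1}$. The difference of functionals is exactly $\mathcal A_{h,V}(F)-\mathcal A_{h,V}(V)$, because the perimeter across $S$ is unchanged and the bulk terms differ only on $V$. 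Hence \eqref{eq:one-sided-minimization} holds.

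\emph{Inner side} (interface $\Sigma_i$ seen from $W_i$). Take $V=(A_i\setminus\Omega_i)\cap\{\rho\le c'\}$ with $c'$ near $b_i$, and $h=-h_i$. Then $H$ with respect to $-\nu_i$ is $-h_i=h$. The complement competitor $\Omega_i\cup(V\setminus F)$ converts minimality of $\Omega_i$ into minimality of $F$ for $\mathcal A_{-h_i,V}$. Near $S$ we have $-h_i\to+\infty$, so hypothesis (ii) holds.

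\emph{Remaining hypotheses.} Hypothesis (iii) holds because $q_\alpha$ is the same for $\pm h_i$ and $q_i>0$ on $A_i^\circ$ by \eqref{eq:band-potential}. The supports of $h^-$ are compact in $V$ since $h\ge0$ near $S$. The neighborhoods of $\Sigma_i$ and $\Sigma_{i+1}$ are disjoint because they lie in the disjoint bands $A_i$ and $A_{i+1}$. $W_0=K_1$ has only the outer side. The regularity requirement $h\in C^\infty(V)$ up to the boundary is fine because $V$ stays away from $\partial A$.

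The main obstacle is the bookkeeping in the one-sided reduction. One must check that the gluing of $F$ with the rest of $\Omega$ produces no extra perimeter on $S$; this uses that $F=V$ near $S$. One must also check that the sign of the bulk term, including the reference region $\widehat\Omega$, cancels in the difference. The complement argument is the subtler case: there the perimeter is symmetric and the volume term flips sign, which is exactly why $h=-h_i$.

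Applying \cref{prop:block-filling} then gives $G_i$ with $R_{G_i}>0$ and $G_i=dt^2+k_j$ near $\Sigma_j$.
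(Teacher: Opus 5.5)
Your proposal is correct and follows essentially the same route as the paper. You orient the prescribed function by the outward normal of the block ($h_{i+1}$ on the $\Omega_{i+1}$-side, $-h_i$ on the complementary side), truncate inside the end collar where the pole limits make it positive, transfer band minimality to one-sided minimality through the competitor $(E\setminus V)\cup F$ (complementing for the inner side), and then invoke \cref{prop:block-filling}. The only slip is in wording: it is the cut $S=\{\rho=c\}$, not all of $V$, that must lie in the collar where $h_{i+1}$ is large and positive.
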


\begin{proof}
We check the local hypotheses inside each block. The prescribed
function must be oriented using the outward normal of that block:
it is $h_1$ on the boundary of $W_0$, while for $W_i$, $i\ge1$, it is
\[
 f_i=-h_i\quad\text{at }\Sigma_i,\qquad
 f_{i+1}=h_{i+1}\quad\text{at }\Sigma_{i+1}.
\]
The construction in the proof of \cref{prop:block-decomposition}
gives $K_j\cap A_j=\Omega_j$ up to volume-null sets, and the ordered
bands satisfy $b_j<a_{j+1}$. Thus $W_i$ occupies the complementary
side of $\Omega_i$ in $A_i$ and the $\Omega_{i+1}$-side in $A_{i+1}$.
The prescribed-function signs follow from
Item (\ref{item:band-normal-orientation}). Complementing a minimizing region
reverses the sign of the prescribed function and preserves the
minimizing inequality.

For each interface, truncate the appropriate band on the side
occupied by $W_i$ to obtain a compact smooth neighborhood $V$ of
that interface in $W_i$. The truncation can be made in the fixed
end collar: the pole limits in Item (\ref{item:band-collars}) ensure
$f\ge0$ near its artificial boundary. More explicitly, on the
$\Omega_j$-side we truncate near $\partial_{\rm in}A_j$, where
$f=h_j>0$; on the complementary side we truncate near
$\partial_{\rm out}A_j$, where $f=-h_j>0$. The resulting
neighborhoods are disjoint because the original bands are disjoint.

Let $E$ be the block-side minimizing region in $A_j$. Given an
admissible $F\subset V$, set $E_F=(E\setminus V)\cup F$, leaving
the region unchanged elsewhere in the band. This is an admissible
competitor in \cref{def:minimizing-mu-bubble}. The BV gluing formula gives
\[
 P(E_F;A_j^\circ)-P(E;A_j^\circ)=P(F;V)-P(V;V).
\]
Indeed, the exterior trace across $\Sigma_j$ is zero, so the
interface contribution is $\int_{\Sigma_j}T\chi_F\,dA_g$; the
artificial-boundary terms $\operatorname{Area}_g(S)$ on the right
cancel. The volume difference is
\[
 \int_{A_j^\circ}(\chi_{E_F}-\chi_E)f\,dV_g
 =\int_V(\chi_F-1)f\,dV_g.
\]
Hence minimization in the band gives
$\mathcal A_{f,V}(V)\le\mathcal A_{f,V}(F)$.
The original first variation also gives $H_g=f$ in this orientation.

Finally, $R_g>0$ on each compact block, and \eqref{eq:band-potential}
implies \eqref{eq:filling-q}, since $f^2=h_j^2$ and
$|\nabla f|=|\nabla h_j|$. Applying \cref{prop:block-filling} with
the prescribed metrics $k_j$ gives the required metrics $G_i$,
with the same $k_j$ on the two blocks adjacent to $\Sigma_j$.
\end{proof}

\section{Global construction}\label{sec4}

To obtain a uniform lower bound of scalar curvature, the block metrics must be
rescaled independently.  This generally induces distinct constant
rescalings of $k_i$ on the two sides of $\Sigma_i$.  The following proposition
performs the rescaling and interpolation and establishes completeness.

\begin{proposition}
\label{prop:uniform-gluing}
Let $n\ge 3$, and $M^n$ be a connected noncompact manifold admitting an
exhaustion by compact smooth codimension-zero submanifolds
$$
  K_i\Subset\operatorname{Int}K_{i+1},
  \qquad M=\bigcup_iK_i,
$$
and put
$$
  \Sigma_i=\partial K_i,
  \qquad W_0=K_1,
  \qquad W_i=K_{i+1}\setminus\operatorname{Int}K_i.
$$
Suppose each $\Sigma_i$ carries a positive-scalar-curvature metric $k_i$, and each $W_i$
carries a positive-scalar-curvature metric $G_i$ that is exactly $dt^2+k_j$ near every
boundary component $\Sigma_j$.  Then $M$ carries a smooth complete metric $\bar g$ satisfying
$$
  R_{\bar g}\ge1.
$$
\end{proposition}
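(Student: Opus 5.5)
Because each block $W_i$ is compact and $R_{G_i}>0$, we first rescale every block so that its scalar curvature is large, then glue consecutive blocks along $\Sigma_i$ by a long warped neck that interpolates between the two rescaled copies of $k_i$.

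\emph{Step 1 (rescaling).} Set $\lambda_i^2:=\min\{1,\tfrac12\min_{W_i}R_{G_i}\}$. Then $\lambda_i^2G_i$ has $R\ge2$, and near $\Sigma_j$ it equals $ds^2+\lambda_i^2k_j$ with $s=\lambda_it$. Shrink further so that also $\lambda_i^2\le\tfrac12\min R_{k_j}$ for the boundary components $\Sigma_j$ of $W_i$. This guarantees that $R_{\lambda_i^2k_j}\ge2$ on the collars, and more generally $R_{c^2k_j}\ge2$ for every $c\le\lambda_i$.

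\emph{Step 2 (necks).} The blocks $W_{i-1}$ and $W_i$ meet at $\Sigma_i$ and induce $\lambda_{i-1}^2k_i$ and $\lambda_i^2k_i$ there. On $[0,L_i]\times\Sigma_i$ we insert the warped metric $ds^2+\phi(s)^2k_i$, with $\phi$ monotone from $\lambda_{i-1}$ to $\lambda_i$ and constant near the ends, so that the glued metric is smooth. Its scalar curvature is
\[
\phi^{-2}R_{k_i}-2(n-1)\frac{\phi''}{\phi}-(n-1)(n-2)\frac{\phi'^2}{\phi^2}.
\]
Since $\phi\le\mu_i:=\max(\lambda_{i-1},\lambda_i)$ and $\lambda$ was chosen so that $\mu_i^{-2}\min R_{k_i}\ge2$, the first term is at least $2$. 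We take $\phi=\exp(\eta(s/L_i))$ with $\eta$ a fixed smooth monotone transition between $\log\lambda_{i-1}$ and $\log\lambda_i$. Then $\phi'/\phi=O(|\log(\lambda_i/\lambda_{i-1})|/L_i)$ and $\phi''/\phi=O(|\log(\lambda_i/\lambda_{i-1})|^2/L_i^2+|\log(\lambda_i/\lambda_{i-1})|/L_i^2)$. Choosing $L_i$ large makes these corrections at most $1$, which gives $R\ge1$ on the neck. The step needing care is checking the bound $\phi^{-2}R_{k_i}\ge2$: it requires the constraint of Step 1 for both adjacent blocks, and since both $\lambda_{i-1}$ and $\lambda_i$ were capped using $k_i$, it holds. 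Since each $\Sigma_i$ is compact, the choices are made independently for each $i$.

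\emph{Step 3 (completeness).} The resulting manifold is diffeomorphic to $M$, because inserting collars $[0,L_i]\times\Sigma_i$ does not change the diffeomorphism type. The metric $\bar g$ has $R_{\bar g}\ge1$ everywhere. To get completeness we also require $L_i\ge1$. Any path leaving every compact set must cross the necks at $\Sigma_i$ for all large $i$, and each crossing costs length at least $L_i\ge1$. Hence divergent paths have infinite length, and $\bar g$ is complete, since closed bounded sets are contained in finitely many blocks and necks and are therefore compact.

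I expect Step 2 to be the main obstacle. Its difficulty is coordinating the curvature bound near the neck with the independent rescalings. The cap on $\lambda_i$ in terms of the interface metrics $k_j$ is what resolves this, because otherwise a large rescaling factor could destroy the $\phi^{-2}R_{k_i}$ term.
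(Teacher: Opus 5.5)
Your proposal is correct and follows essentially the same route as the paper. In both, each block is rescaled by a factor capped by $\min R_{G_i}$ and by $\min R_{k_j}$ on its boundary interfaces; consecutive collars are joined by a warped neck $ds^2+\phi^2k_i$ whose $\log\phi$ interpolates between $\log\lambda_{i-1}$ and $\log\lambda_i$ over a length $L_i\ge1$ chosen large relative to $|\log(\lambda_i/\lambda_{i-1})|$; and completeness follows because escaping curves cross infinitely many necks. The only differences are cosmetic, such as your curvature margin $2$ with error at most $1$ versus the paper's margin $4$ with error at most $3$.
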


\begin{proof}
The proof has three steps: rescale the compact blocks, join the two scales
at each interface by a sufficiently long warped product, and use the neck
lengths to prove completeness.

\smallskip
\noindent\emph{Step 1: rescale the blocks.}
Set
$$
  \sigma_i=\min_{W_i}R_{G_i}>0,
  \qquad
  \kappa_j=\min_{\Sigma_j}R_{k_j}>0.
$$
Choose $\lambda_0>0$ and $\lambda_i>0$, $i\ge1$, such that
$$
  \lambda_0^{-2}\min\{\sigma_0,\kappa_1\}\ge4,
$$
and
$$
  \lambda_i^{-2}
  \min\{\sigma_i,\kappa_i,\kappa_{i+1}\}\ge4
  \qquad(i\ge1).
$$
Then the metric $\widehat G_i=\lambda_i^2G_i$ has
scalar curvature $R_{\widehat G_i}=\lambda_i^{-2}R_{G_i}\ge4$.
Near $\Sigma_j\subset\partial W_i$, after the collar-coordinate change
$\tau=\lambda_it$, it is
$$
  \widehat G_i=d\tau^2+\lambda_i^2k_j.
$$

\smallskip
\noindent\emph{Step 2: interpolate the boundary scales.}
At $\Sigma_i$, insert $N_i=[0,L_i]\times\Sigma_i$ with
$$
  g_i^{\rm neck}=ds^2+w_i(s)^2k_i,
$$
where $w_i$ is constant near the endpoints and
$$
  \log w_i(s)
  =(1-\chi(s/L_i))\log\lambda_{i-1}
   +\chi(s/L_i)\log\lambda_i.
$$
Here $\chi\colon[0,1]\to[0,1]$ is a fixed and smooth function, with
$\chi\equiv0$ near $0$ and $\chi\equiv1$ near $1$.  Put
$$
  m=n-1,
  \qquad
  \theta_i=\log\frac{\lambda_i}{\lambda_{i-1}},
  \qquad
  C_1=\|\chi'\|_\infty,
  \qquad
  C_2=\|\chi''\|_\infty.
$$
Then
$$
  \frac{w_i'}{w_i}=\frac{\theta_i}{L_i}\chi'(s/L_i),
$$
and
$$
  \frac{w_i''}{w_i}
  =\frac{\theta_i}{L_i^2}\chi''(s/L_i)
   +\frac{\theta_i^2}{L_i^2}\chi'(s/L_i)^2.
$$
The scalar curvature of the warped product is
$$
  R_{g_i^{\rm  neck}}
  =w_i^{-2}R_{k_i}
   -2m\frac{w_i''}{w_i}
   -m(m-1)\left(\frac{w_i'}{w_i}\right)^2.
$$
The function $w_i$ lies between its endpoint values.  The two scale
conditions therefore imply $w_i^{-2}R_{k_i}\ge4$, and hence
$$
  R_{g_i^{\rm neck}}
  \ge4-
  \frac{2mC_2|\theta_i|+m(m+1)C_1^2\theta_i^2}{L_i^2}.
$$
Choose
$$
  L_i\ge
  \max\left\{1,
  \sqrt{\frac{2mC_2|\theta_i|+m(m+1)C_1^2\theta_i^2}{3}}
  \right\}.
$$
Then $R_{g_i^{\rm neck}}\ge1$.

Trim pairwise disjoint smaller product bicollars from the adjacent scaled
blocks and insert $N_i$ between the exposed product boundaries.  Since $w_i$
is constant near both endpoints, the metrics agree on open product collars
and glue smoothly.  Replacing a bicollar by a longer finite cylinder does
not change the diffeomorphism type.  The interfaces leave every compact
subset of $M$, so these replacements are locally finite.  They produce a
smooth metric on a manifold diffeomorphic to $M$; pulling it back gives a
smooth metric $\bar g$ on $M$.  The notation $K_i$, $\Sigma_i$, and $N_i$ is
retained for the corresponding images.  The block and neck
estimates give $R_{\bar g}\ge1$.

\smallskip
\noindent\emph{Step 3: completeness.}
Let $\gamma\colon[0,T)\to M$ be a locally Lipschitz curve that leaves every
compact subset.  Choose $N$ with $\gamma(0)\in\operatorname{Int}K_N$.
The two faces of each $N_i$ separate the already glued inner blocks from the
remaining outer blocks.  Hence, by continuity, a curve that eventually
leaves the compact inner region contains a subarc joining those two faces.
For every sufficiently large $i$, a subarc of $\gamma$ crosses one connected
component of $N_i$ from $s=0$ to $s=L_i$.  If $J_i\subset[0,T)$ is its
parameter interval, then
$$
  L_{\bar g}(\gamma|_{J_i})
  \ge L_i\ge1.
$$
These crossing subarcs have disjoint parameter intervals, so the lower
bounds sum to infinity.  Every curve leaving all compact subsets therefore
has infinite length, and the escaping curve formulation of Hopf--Rinow proves completeness.
\end{proof}
\begin{proof}[Proof of Theorem~\ref{thm:intro-main}]
Fix a compact smooth domain $K_0\Subset M$.
\Cref{prop:block-decomposition} gives an exhaustion $K_i$ whose
interfaces $\Sigma_i$ are smooth minimizing $\mu$-bubbles for the
prescribed functions $h_i$. The upper dimension bound is used here
for smooth regularity. By \cref{prop:interface-psc}, choose once and
for all metrics $k_i\in[g|_{\Sigma_i}]$ with $R_{k_i}>0$; the lower
bound $n\ge4$ makes these interfaces at least three-dimensional.
The band signs and curvature inequalities verify the hypotheses of
\cref{prop:block-filling}. Its application in
\cref{prop:block-metrics} gives metrics $G_i$ on the compact blocks
with $R_{G_i}>0$ and the same product metric $dt^2+k_i$ on the two
blocks adjacent to $\Sigma_i$. These data satisfy
\cref{prop:uniform-gluing}, which gives a smooth complete metric
$\bar g$ on $M$ with $R_{\bar g}\ge1$.
\end{proof}

\appendix
\section{Akutagawa--Botvinnik's extension theorem and the conformal Robin form}
\label{appendixA}

This appendix recalls the part of Akutagawa and Botvinnik's relative
Yamabe theory \cite{AkutagawaBotvinnik2002a,AkutagawaBotvinnik2002b}
that underlies the filling argument in \cref{sec3}. We state their
extension theorem in the form used here and record a revised
eigenfunction estimate communicated to us by the authors. We also
explain the relation to the conformal Robin quadratic form and record
the square identity used in our application.

Let $W^n$, $n\ge3$, be a compact smooth manifold with boundary. We use
the outward unit normal and take $H_g$ to be the trace of the second
fundamental form. Define
\[
 E_g(u)=\frac{4(n-1)}{n-2}\int_W|\nabla u|^2\,dV_g
       +\int_W R_gu^2\,dV_g
       +2\int_{\partial W}H_gu^2\,dA_g,
\]
and write the relative Yamabe constant of $[g]$ in the equivalent
form
\begin{equation}\label{eq:relative-yamabe-definition}
 Y_{[g]}(W,\partial W;[g|_{\partial W}])
 :=\inf_{0\ne u\in H^1(W)}
 \frac{E_g(u)}
 {\left(\int_W|u|^{2n/(n-2)}\,dV_g\right)^{(n-2)/n}}.
\end{equation}
For a conformal class $[k]$ on $\partial W$, the relative Yamabe
invariant of Akutagawa and Botvinnik is
\[
 Y(W,\partial W;[k])
 :=\sup_{\substack{\overline{\mathcal C}\text{ a conformal class on }W\\
                   \overline{\mathcal C}|_{\partial W}=[k]}}
       Y_{\overline{\mathcal C}}(W,\partial W;[k]).
\]
Here $Y_{\overline{\mathcal C}}$ is the fixed-class constant for any
representative of $\overline{\mathcal C}$. In particular,
\eqref{eq:filling-positive-quotient} implies
$Y(W,\partial W;[g|_{\partial W}])>0$.

The conformal Laplacian and conformal Robin operator are
\begin{equation}\label{conformalrobin}
 L_g=-\frac{4(n-1)}{n-2}\Delta_g+R_g,
 \qquad B_g=\partial_\nu+\frac{n-2}{2(n-1)}H_g,
\end{equation}
where $\Delta=\diver\nabla$.
For a positive smooth function $\phi$, set
$\widehat g=\phi^{4/(n-2)}g$. The conformal covariance of $L_g$ and $B_g$
is
\begin{equation}\label{eq:robin-conformal-covariance}
  L_{\widehat g}v
  =\phi^{-(n+2)/(n-2)}L_g(\phi v),
  \qquad
  B_{\widehat g}v
  =\phi^{-n/(n-2)}B_g(\phi v).
\end{equation}
Consequently,
\begin{equation}\label{eq:yamabe-quotient-covariance}
  E_{\widehat g}(v)=E_g(\phi v),
  \qquad
  \int_W|v|^{\frac{2n}{n-2}}\,dV_{\widehat g}
  =\int_W|\phi v|^{\frac{2n}{n-2}}\,dV_g.
\end{equation}
To compare \eqref{eq:relative-yamabe-definition} with the definition
in \cite{AkutagawaBotvinnik2002a,AkutagawaBotvinnik2002b}, choose a
smooth positive function $\phi$ with $B_g\phi=0$ on $\partial W$.
Such a function can be prescribed in a boundary collar and extended
positively to $W$. Then $H_{\widehat g}=0$ by
\eqref{eq:robin-conformal-covariance}, and
\eqref{eq:yamabe-quotient-covariance} preserves the quotient.
Smooth functions satisfying the homogeneous Neumann condition are
dense in $H^1(W)$, by even reflection in a boundary collar and
smoothing. For any such function $v$, the functions
$\sqrt{v^2+\varepsilon^2}$ are smooth, positive, and satisfy the same
boundary condition; they converge to $|v|$ in $H^1(W)$ as
$\varepsilon\downarrow0$. Thus the infimum in
\eqref{eq:relative-yamabe-definition} agrees with the infimum over
positive smooth conformal factors preserving the minimal boundary
condition, as in Akutagawa and Botvinnik's formulation.

The following is the implication of
\cite[Corollary~B]{AkutagawaBotvinnik2002a} used in
\cref{lem:robin-filling}.

\begin{theorem}[Akutagawa--Botvinnik]
\label{thm:ab-filling}
Let $W^n$ be a compact smooth oriented manifold with nonempty boundary
and dimension $n\ge3$, and let $k$ be a smooth
positive-scalar-curvature metric on $\partial W$. If
\[
  Y(W,\partial W;[k])>0,
\]
then $k$ extends to a positive-scalar-curvature metric $G$ on $W$
such that
\[
  G=dt^2+k
\]
near $\partial W$.
\end{theorem}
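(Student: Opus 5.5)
The plan follows Akutagawa--Botvinnik's strategy. First conformally normalize the boundary, then use positivity of the relative Yamabe invariant to get a conformal metric with positive scalar curvature and minimal boundary, and finally deform near the boundary to an exact product collar.

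\emph{Step 1 (normalization).} By the definition of $Y(W,\partial W;[k])$ as a supremum, positivity gives a conformal class $\overline{\mathcal C}$ on $W$ with $\overline{\mathcal C}|_{\partial W}=[k]$ and $Y_{\overline{\mathcal C}}>0$. Pick a representative $g$ with $g|_{\partial W}=k$ exactly; this is possible by rescaling with a positive function extending the boundary conformal factor. Then use the Robin normalization from the discussion preceding the theorem: choose $\phi>0$ with $\phi=1$ on $\partial W$ and $B_g\phi=0$. This gives a metric, still denoted $g$, with $g|_{\partial W}=k$ and $H_g=0$, and the quotient is unchanged.

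\emph{Step 2 (positive scalar curvature with minimal boundary).} Since $Y_{[g]}>0$, the quadratic form $E_g$ is coercive on $H^1(W)$ (positive Yamabe constant implies positive first eigenvalue). Consider the eigenvalue problem $L_gu=\lambda_1u$ in $W$, $\partial_\nu u=0$ on $\partial W$. Here $H_g=0$, so $B_g=\partial_\nu$. The first eigenfunction $u$ is smooth and positive by the strong maximum principle and Hopf's lemma, and $\lambda_1>0$. The metric $u^{4/(n-2)}g$ then has scalar curvature $\lambda_1u^{-4/(n-2)}>0$ and boundary mean curvature $0$. Its boundary metric is $u|_{\partial W}^{4/(n-2)}k$, which is conformal to $k$ but not equal to it.

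\emph{Step 3 (collar).} Re-express the resulting metric in a Gaussian collar: $G=dt^2+k_t$ with $k_0\in[k]$, $\partial_tk_t|_{t=0}=0$ trace-free part aside (the mean curvature vanishes). Two deformations are now needed. The first removes the second fundamental form near the boundary while keeping $R>0$; with $H=0$ this can be done by a short cutoff interpolation $dt^2+k_{\eta(t)t}$, whose error terms are controlled because the first-order term is traceless. The second connects $k_0$ to $k$ through the conformal path $k_s=v_s^{4/(n-3)}k$. Both $k_0$ and $k$ have positive scalar curvature: $k$ by hypothesis, and $k_0$ only after a further argument, since the induced metric need not have $R>0$. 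I would instead first apply Step 2 with a boundary condition that also keeps the boundary metric fixed, which is the content of the revised eigenfunction lemma mentioned in the acknowledgements. Given that, attach a long cylinder $dt^2+k$ through a slowly varying isotopy of positive scalar curvature metrics, using the standard lemma that $dt^2+k_{\beta(t)}$ has positive scalar curvature when $\beta'$ and $\beta''$ are small.

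The main obstacle is Step 3: arranging that the boundary metric stays exactly $k$, or at least in a positive scalar curvature conformal class path, while preserving $R>0$ and reaching an exact product. This is precisely where the original Lemma~6 of \cite{AkutagawaBotvinnik2002a} had a gap. I would handle it via the communicated revised estimate: solve the mixed problem with $u\to1$ quantitatively near $\partial W$, so that the boundary-metric distortion is small and the second fundamental form is $C^1$-small. Then a $C^2$-controlled interpolation to $dt^2+k$ costs less than $\min R$.
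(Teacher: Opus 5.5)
Your Steps~1 and~2 are sound; Step~1 is the Robin normalization the paper itself uses to compare definitions. But they only produce a metric in $\overline{\mathcal C}$ with $R>0$, $H=0$ and boundary metric $k_0=u^{4/(n-2)}k$. All of the theorem's content is in Step~3, and there your argument has a genuine gap.

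A metric that is an exact product $dt^2+k'$ near $\partial W$ has scalar curvature $R_{k'}$ there. So your cutoff $dt^2+k_{\eta(t)t}$ fails wherever $R_{k_0}\le0$, and nothing in Steps~1--2 prevents that.

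The repair you propose is not a well-posed problem. The eigenvalue problem for $L_g$ on $W$ takes one boundary condition, not both $u=1$ (to keep $k$) and $\partial_\nu u=0$ (to keep $H=0$). With only the Dirichlet condition, the new mean curvature is a positive multiple of $\partial_\nu u$, whose sign is uncontrolled. With only the Neumann condition you are back to $k_0$. Nor can a conformal factor make the second fundamental form small: the trace-free part transforms as $\mathring{\mathrm{II}}\mapsto u^{2/(n-2)}\mathring{\mathrm{II}}$. With $u=1$ on $\partial W$ it therefore stays exactly that of the representative you fixed in Step~1.

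You have also misread the revised lemma. \Cref{lem:ab6-corrected} is not a boundary condition on $\partial W$. It is a uniform bound $u_\ell\le\widehat K$ on the terminal half-cylinder $Z\times[\ell/2,\ell]$ for the first Dirichlet eigenfunction of $L_{\bar g}$ on the truncated cylindrical manifold $X(\ell)=W\cup_Z(Z\times[0,\ell])$. That long-cylinder mechanism is what your proposal is missing.

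The paper does not reprove the theorem. It quotes it as Akutagawa--Botvinnik's Corollary~B and supplies only the revised Lemma~6. Their route is as follows:
\begin{enumerate}
\item Take a background metric that is already an exact product $dt^2+h$ on the attached cylinder, with $R_h\equiv1$ and $\nu_1(\bar g)\ge1$ (their Lemmas~1--2).
\item Then $\lambda_\ell\ge1$, and the eigenfunctions are uniformly controlled far along the cylinder.
\item Modify the conformal factor there so that it becomes constant. The length of the cylinder gives the room that a short collar on $W$ lacks, keeping the cutoff errors below the available scalar curvature.
\end{enumerate}
This yields a positive-scalar-curvature metric that is an exact product near the end, with cross-section a constant multiple of $h$, a positive-scalar-curvature metric in $[k]$. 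Only at that point does your last step, a slowly varying path of positive-scalar-curvature metrics in $[k]$ on a long cylinder, finish the proof.
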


Akutagawa and Botvinnik obtain this result from their extension
theorem \cite[Theorem~3]{AkutagawaBotvinnik2002a}. The eigenfunction
estimate in Lemma~6 of that paper requires the revision described
below. We are grateful to Akutagawa and Botvinnik for communicating
the revised statement and proof and for permitting us to include them
here. The revision is due to them; we present it in notation compatible
with this paper.

We follow the cylindrical construction of Akutagawa and Botvinnik
\cite[Sections~3.2 and~4, pp.~825--829]{AkutagawaBotvinnik2002a}.
For this construction, work on a connected component of $W$ with
nonempty boundary and set $Z=\partial W$, including all its boundary
components. Thus $W$ is connected, whereas $Z$ need not be. Write
\[
X=W\cup_Z\bigl(Z\times[0,\infty)\bigr),\qquad
X(\ell)=W\cup_Z\bigl(Z\times[0,\ell]\bigr).
\]
As in
\cite[eq.~(6), p.~826]{AkutagawaBotvinnik2002a},
\begin{equation}\label{eq:ab6-core-spectrum}
\nu_1(\bar g)=
\inf_{f\in C^\infty(X(1)),\,f\not\equiv0}
\frac{\int_{X(1)}
       \bigl(\frac{4(n-1)}{n-2}|df|_{\bar g}^2+R_{\bar g}f^2\bigr)
       \,d\sigma_{\bar g}}
     {\int_{X(1)}f^2\,d\sigma_{\bar g}}.
\end{equation}
Equivalently, this is the infimum over nonzero $H^1(X(1))$
functions. In particular, $\nu_1(\bar g)$ is the first Neumann
eigenvalue on the compact core, whereas the eigenvalues
$\lambda_\ell$ below are Dirichlet eigenvalues.

Fix the background metric supplied by
\cite[Lemmas~1--2]{AkutagawaBotvinnik2002a}, so that
\begin{equation}\label{eq:ab6-background}
\bar g=h+dt^2\quad\text{on }Z\times[1-\varepsilon,\infty),
\qquad R_h\equiv1,\qquad \nu_1(\bar g)\ge1.
\end{equation}
Put
\[
L_{\bar g}=-\frac{4(n-1)}{n-2}\Delta_{\bar g}+R_{\bar g},
\]
and retain the volume-element notation $d\sigma_{\bar g}$.

Let $\lambda_\ell$ be the first Dirichlet eigenvalue of $L_{\bar g}$ on
$X(\ell)$, as on p.~828 of \cite{AkutagawaBotvinnik2002a}. For $\ell\ge5$,
choose its positive eigenfunction $u_\ell$ with
\begin{equation}\label{eq:ab6-eigenfunction}
\begin{gathered}
L_{\bar g}u_\ell=\lambda_\ell u_\ell
\quad\text{on }X(\ell),\qquad
u_\ell>0\quad\text{in }{\rm int}X(\ell),\\
u_\ell=0\quad\text{on }Z\times\{\ell\},\qquad
\min_{Z\times\{\ell/2\}}u_\ell=1.
\end{gathered}
\end{equation}
In this revised formulation, the eigenfunction is normalized at
$t=\ell/2$, and the uniform upper bound is asserted on the terminal
half-cylinder $Z\times[\ell/2,\ell]$.

\begin{lemma}[Akutagawa--Botvinnik, revised form of Lemma~6 in \cite{AkutagawaBotvinnik2002a}]
\label{lem:ab6-corrected}
Under \eqref{eq:ab6-background}--\eqref{eq:ab6-eigenfunction}, there is a
constant $\widehat K>0$, independent of $\ell\ge5$, such that
\begin{equation}\label{eq:ab6-conclusion}
u_\ell\le\widehat K\qquad\text{on }Z\times[\ell/2,\ell].
\end{equation}
\end{lemma}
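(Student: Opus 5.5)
The plan is to first locate the eigenvalue $\lambda_\ell$, then use a Harnack inequality and a barrier on the product half-cylinder.

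\emph{Eigenvalue bounds.} Dirichlet eigenvalues decrease in $\ell$, and testing with cutoffs supported in the core shows $\lambda_\ell$ is bounded above uniformly. For a lower bound, split a test function $f$ into its part on $X(1)$ and its part on the cylinder. Since $\nu_1(\bar g)\ge1$ and the cylinder has $R_{\bar g}=R_h=1$, the Rayleigh quotient is at least $1$, so $\lambda_\ell\ge1$. Thus $\lambda_\ell\in[1,\Lambda]$ for all $\ell\ge5$.

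\emph{Uniform Harnack on slabs.} On the cylindrical part $\bar g=h+dt^2$ is translation invariant. $u_\ell$ solves the fixed-type linear equation $-a_n\Delta u+(1-\lambda_\ell)u=0$ there, with coefficients bounded uniformly. The Harnack inequality on slabs $Z\times[s-1,s+1]$ (componentwise in $Z$) gives $\max_{Z\times\{s\}}u_\ell\le C_H\min_{Z\times\{s\}}u_\ell$ for $s\in[2,\ell-1]$, with $C_H$ independent of $\ell$ and $s$. Near $t=\ell$, I would use boundary Harnack/boundary gradient estimates, or compare directly, to control the last unit slab by the values at $t=\ell-1$.

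\emph{Barrier on the terminal half-cylinder.} Decompose $u_\ell$ on $Z\times[\ell/2,\ell]$ using the eigenfunctions $\phi_k$ of $-a_n\Delta_h+1$ on $Z$, with eigenvalues $\mu_k$ and $\mu_0\ge1$. Each mode $c_k(t)$ satisfies $a_nc_k''=(\mu_k-\lambda_\ell)c_k$ and $c_k(\ell)=0$.
\begin{itemize}
\item If $\mu_k>\lambda_\ell$, then $c_k$ is a multiple of $\sinh(\beta_k(\ell-t))$, which is monotone in $t$.
\item If $\mu_k<\lambda_\ell$, then $c_k$ is a multiple of $\sin(\beta_k(\ell-t))$. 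Positivity of $u_\ell$ on the whole interval forces $\beta_0(\ell-1)\le\pi$ for the ground mode $c_0>0$, and this constrains the oscillatory modes.
\end{itemize}
The cleaner route is to work with $\bar u(t)=\int_Z u_\ell(\cdot,t)\,\phi_0$, which is positive and satisfies a one-dimensional ODE. This ODE is either monotone decreasing ($\sinh$ type) or concave of $\sin$ type with its zero at $\ell$ and no earlier zero on $[1,\ell]$. In either case, on $[\ell/2,\ell]$ one has $\bar u(t)\le C\,\bar u(\ell/2)$ with $C$ independent of $\ell$: a positive $\sin$ arc vanishing at $\ell$ whose half-period exceeds $\ell-1$ has $\max_{[\ell/2,\ell]}\le C\,$value at $\ell/2$, since $\ell/2$ lies at least a quarter period away from the zero. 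Combining this with the slab Harnack inequality converts the bounds on $\bar u$ into pointwise bounds. Since $\min_{Z\times\{\ell/2\}}u_\ell=1$ gives $\bar u(\ell/2)\le C_H\|\phi_0\|_{L^1}$, we get $u_\ell\le\widehat K$ on $Z\times[\ell/2,\ell]$.

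\emph{Main obstacle.} The hardest step is when $\lambda_\ell>\mu_0$, the oscillatory regime, possibly with $Z$ disconnected so that there are several ground modes. Here one must exclude growth toward $t=\ell$ uniformly. I would first try the argument that the sine profile has half-period at least $\ell-1$, comparing via the Dirichlet problem on $Z\times[1,\ell]$, so that its maximum is within a factor $\sqrt2$-type constant of its value at the midpoint. Positivity handles each component of $Z$ separately.
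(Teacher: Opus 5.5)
There is a genuine gap when $Z$ is disconnected. This is exactly the case the revised lemma has to cover: the paper stresses that $W$ is connected while $Z=\partial W$ need not be, and the blocks $W_i$ have boundary $\Sigma_i\sqcup\Sigma_{i+1}$.

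Your slab Harnack inequality is componentwise. So the normalization $\min_{Z\times\{\ell/2\}}u_\ell=1$ controls $u_\ell$ only on the one component $Z_{j_0}\times\{\ell/2\}$ where the minimum is attained. The step ``$\min_{Z\times\{\ell/2\}}u_\ell=1$ gives $\bar u(\ell/2)\le C_H\|\phi_0\|_{L^1}$'' therefore does not follow. Your argument gives no $\ell$-uniform bound on $u_\ell$ at $t=\ell/2$ over the other components. Treating each component ``separately'' cannot supply one, because distinct components of $Z$ communicate only through the compact core.

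The missing idea is a cross-component comparison. Since $R_h\equiv1$, the ground state of $-a_n\Delta_h+1$ is locally constant with $\mu_0=1$ on \emph{every} component. Hence each component mean $m_{\ell,j}(t)$ solves the same problem $m''+\kappa_\ell^2m=0$, $m(\ell)=0$, with $\kappa_\ell^2=(\lambda_\ell-1)/a_n$. All the $m_{\ell,j}$ are therefore proportional, and
\[
 \frac{m_{\ell,i}(\ell/2)}{m_{\ell,j}(\ell/2)}
 =\frac{m_{\ell,i}(1)}{m_{\ell,j}(1)}.
\]
The right-hand side is bounded independently of $\ell$ by a Harnack chain through the connected core $X(2)$, whose geometry is fixed, using your bound $\lambda_\ell\le\Lambda$. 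Only then are all component means at $t=\ell/2$ bounded. From there your one-dimensional sine bound and slab Harnack finish the argument. For connected $Z$ your outline is sound, apart from the point below.

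There are two smaller points. First, your treatment of the last slab $Z\times[\ell-1,\ell]$ is only sketched. It can be closed by a boundary local maximum principle: the nonnegative solution vanishing on $Z\times\{\ell\}$ is bounded there by its $L^1$-norm on $Z\times[\ell-2,\ell]$, hence by the means. The paper avoids boundary regularity altogether. After the midpoint bound it compares $u_\ell$ with the explicit barrier $v_\ell=\cos\bigl(\frac{\pi}{\ell-1}(t-\frac{\ell+2}{2})\bigr)$, which stays positive up to $t=\ell$. The maximum principle for the drift equation satisfied by $u_\ell/v_\ell$ then bounds $u_\ell$ on the whole half-cylinder at once.

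Second, deriving $\kappa_\ell(\ell-1)<\pi$ from positivity of the mean is a valid substitute for the paper's Rayleigh-quotient upper bound on $\lambda_\ell$. However, your stated reason for the sine estimate is off. What matters is that $\ell/2$ lies at most $\ell/(\ell-1)\le5/4$ quarter periods from the zero at $t=\ell$, not at least one. Then $\kappa_\ell\ell/2<5\pi/8$, so the value at $\ell/2$ is at least $\cos(\pi/8)$ times the maximum over $[\ell/2,\ell]$.
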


\begin{proof}
We first record the eigenvalue estimate needed for the comparison. The
energy decomposition between $X(1)$ and $Z\times[1,\ell]$, together with
\eqref{eq:ab6-background}, gives $\lambda_\ell\ge1$; compare
\cite[Lemma~3]{AkutagawaBotvinnik2002a}. On the other hand, the function
\[
f_\ell(z,t)=\sin\frac{\pi(t-1)}{\ell-1},\qquad 1\le t\le\ell,
\]
extended by zero to $X(1)$, belongs to $H_0^1(X(\ell))$. Its Rayleigh
quotient gives
\begin{equation}\label{eq:ab6-spectral}
1\le\lambda_\ell\le
1+\frac{4(n-1)}{n-2}\left(\frac{\pi}{\ell-1}\right)^2.
\end{equation}
Consequently, on the product cylinder,
\begin{equation}\label{eq:ab6-cylinder-equation}
\Delta_{\bar g}u_\ell=-\kappa_\ell^2u_\ell,
\qquad
\kappa_\ell^2:=\frac{\lambda_\ell-1}{\frac{4(n-1)}{n-2}},
\qquad
0\le\kappa_\ell^2\le\left(\frac{\pi}{\ell-1}\right)^2.
\end{equation}

\emph{The midpoint estimate.}
We claim that there is a constant $\bar K$, independent of $\ell\ge5$, with
\begin{equation}\label{eq:ab6-midpoint}
\max_{Z\times\{\ell/2\}}u_\ell\le\bar K.
\end{equation}
If $Z$ is connected, an interior Harnack chain in
$Z\times(\ell/2-1,\ell/2+1)$ compares the maximum and minimum on
$Z\times\{\ell/2\}$. For $\ell\ge5$, these neighborhoods lie in the
product cylinder and are isometric by translation; the zeroth-order
coefficient is uniformly bounded by \eqref{eq:ab6-spectral}.
The Harnack constant is therefore independent of $\ell$, and the
normalization gives \eqref{eq:ab6-midpoint}.

To include disconnected boundaries, write
$Z=\bigsqcup_{j=1}^m Z_j$, with each $Z_j$ connected, and define
\[
m_{\ell,j}(t)=\frac{1}{\operatorname{Vol}_h(Z_j)}
              \int_{Z_j}u_\ell(z,t)\,d\sigma_h.
\]
Integrating \eqref{eq:ab6-cylinder-equation} over $Z_j$ yields
\[
m_{\ell,j}''+\kappa_\ell^2m_{\ell,j}=0,\qquad
m_{\ell,j}(\ell)=0,\qquad
m_{\ell,j}(t)>0\quad(1\le t<\ell).
\]
Solutions of this ordinary differential equation vanishing at $t=\ell$
form a one-dimensional space. Hence all the positive functions
$m_{\ell,j}$ are proportional, and
\begin{equation}\label{eq:ab6-mean-ratios}
\frac{m_{\ell,i}(\ell/2)}{m_{\ell,j}(\ell/2)}
=
\frac{m_{\ell,i}(1)}{m_{\ell,j}(1)}.
\end{equation}
A fixed Harnack chain through the connected compact core, contained
in $X(2)\Subset{\rm int}X(\ell)$, bounds all ratios on the right by
a constant $C_1$ independent of $\ell$. Here the geometry is fixed
and $\lambda_\ell$ is uniformly bounded by
\eqref{eq:ab6-spectral}. On each connected midpoint slice, the
Harnack inequality gives
\[
 \sup_{Z_j\times\{\ell/2\}}u_\ell
 \le C_2\inf_{Z_j\times\{\ell/2\}}u_\ell,
\]
where $C_2$ is independent of $\ell$ and may be chosen uniformly
over the finitely many components. The component containing the
global minimum $1$ has mean at most $C_2$. By
\eqref{eq:ab6-mean-ratios}, every component mean is at most $C_1C_2$,
and hence every component supremum is at most $C_1C_2^2$.
This proves \eqref{eq:ab6-midpoint}.

\emph{The comparison on the half-cylinder.}
Set
\begin{equation}\label{eq:ab6-comparison}
v_\ell(z,t)=
\cos\left(\frac{\pi}{\ell-1}
             \left(t-\frac{\ell+2}{2}\right)\right),
\qquad \ell/2\le t\le\ell.
\end{equation}
For $\ell\ge5$, this function satisfies
\begin{gather}
0<v_\ell\le1\quad\text{on }Z\times[\ell/2,\ell],
\qquad
\Delta_{\bar g}v_\ell=-\left(\frac{\pi}{\ell-1}\right)^2v_\ell,
\label{eq:ab6-comparison-equation}\\
v_\ell(z,\ell/2)=\cos\frac{\pi}{\ell-1}\ge\frac1{\sqrt2},
\qquad
v_\ell(z,\ell)=\sin\frac{\pi}{2(\ell-1)}>0.
\label{eq:ab6-comparison-endpoints}
\end{gather}
Define $w_\ell=u_\ell/v_\ell$. By
\eqref{eq:ab6-cylinder-equation} and
\eqref{eq:ab6-comparison-equation},
\begin{equation}\label{eq:ab6-quotient}
\begin{aligned}
\Delta_{\bar g}w_\ell
+2\left\langle\frac{\nabla v_\ell}{v_\ell},
                    \nabla w_\ell\right\rangle_{\bar g}
&=\frac{v_\ell\Delta_{\bar g}u_\ell
       -u_\ell\Delta_{\bar g}v_\ell}{v_\ell^2}\\
&=\left[\left(\frac{\pi}{\ell-1}\right)^2-\kappa_\ell^2\right]w_\ell
\ge0.
\end{aligned}
\end{equation}
For each fixed $\ell$, the coefficients of this drift operator are smooth
on the closed half-cylinder, since $v_\ell>0$ there. The maximum principle
and $w_\ell=0$ on $Z\times\{\ell\}$ imply
\[
\sup_{Z\times[\ell/2,\ell]}w_\ell
\le
\max\left\{
\sup_{Z\times\{\ell/2\}}\frac{u_\ell}{v_\ell},\,0
\right\}
\le\sqrt{2}\bar K.
\]
Since $v_\ell\le1$, we obtain
$u_\ell=v_\ell w_\ell\le\sqrt{2}\bar K$ on the half-cylinder.
Taking $\widehat K=\sqrt2\,\bar K$ proves the lemma.
\end{proof}

The preceding proof establishes the revised eigenfunction estimate.
Its use in Akutagawa and Botvinnik's extension argument also requires
corresponding adjustments to the subsequent cutoff construction in
\cite[Section~4]{AkutagawaBotvinnik2002a}. We do not reproduce those
steps here; the extension result invoked in this paper is their
theorem, stated as \cref{thm:ab-filling} above.

We conclude with the square identity used to verify positivity of
the conformal Robin form in \cref{sec3}. For
$X\in L^\infty(TW)$ with $\diver X\in L^\infty(W)$, denote its
outward normal trace by $[X,\nu]$; see \cref{appendixB} for the trace
convention.

\begin{lemma}\label{lem:square}
Let $X\in L^\infty(TW)$ with $\diver X\in L^\infty(W)$.
Then for every $u\in H^1(W)$,
\begin{align}
  E_g(u)
  ={}&\frac{4(n-1)}{n-2}\int_W|\nabla u+Xu|^2\,dV_g\notag\\
  &+\int_W\bigl(R_g+\frac{4(n-1)}{n-2}
    (\diver X-|X|^2)\bigr)u^2\,dV_g\notag\\
  &+\int_{\partial W}\bigl(2H_g-\frac{4(n-1)}{n-2}
    [X,\nu]\bigr)u^2\,dA_g.
  \label{eq:square}
\end{align}
\end{lemma}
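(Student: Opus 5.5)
The identity is algebraic once one has the Gauss--Green formula for bounded divergence-measure fields against $H^1$ functions. I will therefore expand the square and then integrate the cross term by parts. Write $a_n=4(n-1)/(n-2)$. Pointwise a.e. on $W$,
\[
 |\nabla u+Xu|^2=|\nabla u|^2+2u\langle X,\nabla u\rangle+|X|^2u^2
 =|\nabla u|^2+\langle X,\nabla(u^2)\rangle+|X|^2u^2 .
\]
All terms are integrable because $X\in L^\infty$ and $u\in H^1$. Multiplying by $a_n$ and comparing with the right-hand side of \eqref{eq:square}, the $|X|^2u^2$ terms cancel. The $R_g u^2$ and $2H_gu^2$ terms match directly. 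The claim thus reduces to
\[
 \int_W\langle X,\nabla(u^2)\rangle\,dV_g+\int_W(\diver X)\,u^2\,dV_g
 =\int_{\partial W}[X,\nu]\,u^2\,dA_g .
\]

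This is the generalized Gauss--Green formula for $X$ tested against $\varphi=u^2$. The main step is justifying it at this regularity. For $u\in H^1(W)$ we only know $u^2\in W^{1,1}(W)$, with $\nabla(u^2)=2u\nabla u$ and boundary trace $(Tu)^2\in L^1(\partial W)$. The trace $[X,\nu]\in L^\infty(\partial W)$ is furnished by the normal-trace theory recorded in Appendix~\ref{appendixB}. That theory says, in the Anzellotti/Chen--Frid sense, that Gauss--Green holds for $\varphi$ Lipschitz (or $C^1$) up to the boundary.

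I would first prove the reduced identity for $u\in C^\infty(W)$, where $u^2$ is smooth up to $\partial W$ and the formula is exactly the defining property of the normal trace. I would then approximate a general $u\in H^1(W)$ by $u_k\in C^\infty(W)$ in $H^1$. Then $u_k^2\to u^2$ in $W^{1,1}$, since
\[
 \|u_k\nabla u_k-u\nabla u\|_{L^1}\le\|u_k-u\|_2\|\nabla u_k\|_2+\|u\|_2\|\nabla u_k-\nabla u\|_2 .
\]
By continuity of the trace $H^1(W)\to L^2(\partial W)$, also $(Tu_k)^2\to(Tu)^2$ in $L^1(\partial W)$. Each of the three integrals is continuous under these convergences, because $X$, $\diver X$ and $[X,\nu]$ are all bounded. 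Passing to the limit gives the identity for every $u\in H^1(W)$.

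Putting the pieces together, the cross term becomes $a_n\int_{\partial W}[X,\nu]u^2-a_n\int_W(\diver X)u^2$. Rearranging yields \eqref{eq:square}. The only real obstacle is this density/trace step, and it is handled by the boundedness of all three quantities associated to $X$.
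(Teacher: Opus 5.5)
Your proposal is correct and matches the paper's proof: expand the square, apply the Gauss--Green formula with the outward normal trace for smooth $u$, and then pass to general $u\in H^1(W)$ by density. The limit step uses $u_j^2\to u^2$ in $W^{1,1}(W)$, $(Tu_j)^2\to(Tu)^2$ in $L^1(\partial W)$, and the boundedness of $X$, $\diver X$, and $[X,\nu]$, exactly as the paper does. Reducing first to the single Gauss--Green identity, rather than passing every term of the square identity to the limit, is only a cosmetic difference.
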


\begin{proof}
For smooth $u$, the Gauss--Green formula gives
\[
 \int_W\langle X,\nabla(u^2)\rangle\,dV_g
 =-\int_W(\diver X)u^2\,dV_g
  +\int_{\partial W}[X,\nu]u^2\,dA_g.
\]
Expanding $|\nabla u+Xu|^2$ and substituting this identity proves
\eqref{eq:square} for smooth $u$. For general $u\in H^1(W)$, choose
smooth $u_j\to u$ in $H^1(W)$. Then $u_j^2\to u^2$ in
$W^{1,1}(W)$ and $(Tu_j)^2\to(Tu)^2$ in $L^1(\partial W)$,
where $T$ is the boundary trace. Since $X$, $\diver X$, and
$[X,\nu]$ are bounded, every term in \eqref{eq:square} passes to
the limit.
\end{proof}

\section{Total variation subgradients and normal traces}\label{appendixB}
For a bounded divergence-measure vector field $z$ there are, in general, two one-sided traces on an oriented rectifiable hypersurface. In the situation used below, the divergence is absolutely continuous and the two traces agree; $[z,\nu_E]$ denotes their common value with respect to the measure-theoretic unit outward normal $\nu_E$.

For a smooth compact Riemannian domain $U$ and
$f\in L^1(\partial U)$, set
$$
\mathcal T_f(v)=
\begin{cases}
|Dv|(U)+\displaystyle\int_{\partial U}|Tv-f|\,dA,
&v\in BV(U)\cap L^2(U),\\
+\infty,&\text{otherwise}.
\end{cases}
$$
Here $T$ is the $BV$ boundary interior trace.  The functional $\mathcal T_f$ is
proper, convex, and lower semicontinuous on $L^2(U)$.  We use the
multivalued sign
$$
 \sign(t)=
 \begin{cases}
   \{1\},&t>0,\\
   [-1,1],&t=0,\\
   \{-1\},&t<0.
 \end{cases}
$$

\begin{theorem}[G\'orny--Maz\'on]
\label{thm:dirichlet-tv}
For $u\in BV(U)\cap L^2(U)$,
$p\in\partial_{L^2}\mathcal T_f(u)$ if and only if there is
$z\in L^\infty(TU)$ such that
\begin{align*}
 |z|\le1,
 \qquad p=-\diver z\in L^2(U),\\
 \qquad (z,Du)=|Du|\qquad \text{ as  Radon measures on } U,\\
 [z,\nu_U]\in\sign(f-Tu)\qquad \text{ a.e. on }\partial U.
\end{align*}
Here $(z,Du)$ is the Anzellotti pairing, namely, $(z,Du)$ is a Radon measure and for any $\varphi\in C^\infty_0(U)$, $$\int_U \varphi\, d(z,Du)=-\int_Uu \langle z , \nabla\varphi\rangle\,dV-\int_Uu\varphi\diver z\,dV.$$
\end{theorem}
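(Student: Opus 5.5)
The plan is to follow the classical Andreu--Ballester--Caselles--Mazón approach for Dirichlet total variation flows. The direction ``if'' is a direct computation; the direction ``only if'' comes from identifying $\mathcal T_f$ with a support function and then reading off the Euler--Lagrange relations.

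\emph{Sufficiency.} Suppose $z$ satisfies the four listed conditions, and let $w\in BV(U)\cap L^2(U)$ be arbitrary. I will use the Anzellotti generalized Green formula for bounded divergence-measure fields with $\diver z\in L^2$:
\[
 \int_U w\,\diver z\,dV+(z,Dw)(U)=\int_{\partial U}[z,\nu_U]\,Tw\,dA .
\]
Since $|z|\le1$, we have $(z,Dw)(U)\le|Dw|(U)$, and pointwise $[z,\nu_U]\,(Tw-f)\le|Tw-f|$. Combining these gives
\[
 \int_U p\,w\,dV\le\mathcal T_f(w)-\int_{\partial U}[z,\nu_U]f\,dA .
\]
For $w=u$ this becomes an equality, because $(z,Du)=|Du|$ and $[z,\nu_U](Tu-f)=|Tu-f|$ by the sign condition. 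Subtracting the two relations yields $\mathcal T_f(w)-\mathcal T_f(u)\ge\int_U p(w-u)\,dV$, which says $p\in\partial_{L^2}\mathcal T_f(u)$.

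\emph{Necessity.} First I will prove the dual representation
\[
 \mathcal T_f(w)=\sup\Bigl\{-\int_U w\,\diver z\,dV+\int_{\partial U}[z,\nu_U]f\,dA:\ z\in X_2(U),\ |z|\le1\Bigr\},
\]
where $X_2(U)$ denotes bounded fields with $L^2$ divergence. The inequality ``$\ge$'' is the computation above. For ``$\le$'', take $z$ smooth with $|z|\le1$ and approximate $\nu_U\,\sign(f-Tw)$ on the boundary while approximating the polar $Dw/|Dw|$ in the interior. This uses density of $C^1$ fields in the unit ball and a boundary-layer construction, which is possible because the interior trace is attained in a thin collar.

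Consequently $\mathcal T_f=\sigma_K+\ell$ is a support function of the closed convex set
\[
 K=\{(-\diver z,\ [z,\nu_U])\},
\]
tested against $(w,f)$. Given $p\in\partial\mathcal T_f(u)$, the Fenchel equality $\mathcal T_f(u)+\mathcal T_f^*(p)=\int_U pu\,dV$ gives two facts. First, $\mathcal T_f^*(p)<\infty$, so $p$ lies in the $L^2$-closure of the set $\{-\diver z\}$ (with the boundary term accounted for). Second, $p$ attains the supremum at $u$. The step I expect to be hardest is producing an actual $z$ with $p=-\diver z$, rather than a limit of such fields. To do this I will take a maximizing sequence $z_k$ and extract a weak-$*$ limit in $L^\infty$. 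The divergences converge weakly in $L^2$, and the normal traces converge weakly-$*$ in $L^\infty(\partial U)$ by the Green formula, so the constraint $|z|\le1$ is preserved. Lower semicontinuity then shows that the supremum is attained.

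With such a $z$, equality must hold in both inequalities used in the sufficiency part. Equality in the first gives $(z,Du)(U)=|Du|(U)$, and since $(z,Du)\le|Du|$ as measures, the two measures coincide. Equality in the second gives $[z,\nu_U](Tu-f)=|Tu-f|$ a.e. on $\partial U$, which is exactly $[z,\nu_U]\in\sign(f-Tu)$. Alternatively, one can cite Górny--Mazón directly, since this is their theorem, and use the outline above only as a check of the conventions (the outward normal and the sign of $f-Tu$).
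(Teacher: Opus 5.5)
Your argument is correct in substance but takes a different route from the paper. The paper does not reprove the characterization. It invokes \cite[Theorem~4.3]{GM} in the metric-measure setting, and the work is the dictionary:
\begin{itemize}
\item checking doubling, the weak $(1,1)$-Poincar\'e inequality, and density and Ahlfors regularity for $U$ and its complement in a closed extension;
\item identifying the abstract tangent module, weak divergence and pairing with their Riemannian counterparts via \cite{LucicPasqualetto2020};
\item converting G\'orny--Maz\'on's interior-trace condition $(z\cdot\nu_U)^-\in\sign(Tu-f)$ into the outward condition $[z,\nu_U]\in\sign(f-Tu)$.
\end{itemize}
You instead give a self-contained proof on the smooth domain. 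Sufficiency comes from the Anzellotti--Green formula. Necessity is a convex-duality argument: the dual representation of $\mathcal T_f$, closedness of the dual set by weak-$*$ compactness of $\{|z|\le1\}$ with weak-$*$ continuity of normal traces, the Fenchel equality, and the equality cases.

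This bypasses the abstract machinery, and the orientation convention comes directly out of Green's formula. The cost is that you need the classical Anzellotti facts transplanted to charts: $|(z,Du)|\le\|z\|_\infty|Du|$, $\|[z,\nu_U]\|_\infty\le\|z\|_\infty$, and Green's formula for $BV\cap L^2$ against fields with $L^2$ divergence. You also need one convex-analysis step stated in the right order. The identification $\mathcal T_f^*(p)=-\max\{\int_{\partial U}[z,\nu_U]f\,dA:\ -\diver z=p,\ |z|\le1\}$, with $+\infty$ if no such $z$ exists, requires that the set of pairs $\bigl(-\diver z,\,-\int_{\partial U}[z,\nu_U]f\,dA+s\bigr)$, $s\ge0$, be closed in $L^2(U)\times\mathbb R$. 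That closedness is exactly your compactness argument, so it must precede the support-function claim rather than follow it.

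One correction, which matters because orientation is the only delicate point in the paper's version. With $p=-\diver z$, Green's formula gives $\int_U pw\,dV=(z,Dw)(U)+\int_{\partial U}[z,\nu_U](f-Tw)\,dA-\int_{\partial U}[z,\nu_U]f\,dA$. So the pointwise inequality you need is $[z,\nu_U](f-Tw)\le|f-Tw|$. Equality at $w=u$ then means $[z,\nu_U](f-Tu)=|f-Tu|$, which is $[z,\nu_U]\in\sign(f-Tu)$. The identity $[z,\nu_U](Tu-f)=|Tu-f|$ that you write in both directions instead means $[z,\nu_U]\in\sign(Tu-f)$. That is the interior-trace convention, and in \cref{lem:calibration} it would produce $[Z,\nu_W]=-1$ on $\Sigma$ instead of $+1$.
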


\begin{proof}
This is \cite[Theorem~4.3]{GM}, specialized to a smooth Riemannian domain. For the reader's convenience, we sketch the proof here. It suffices to treat one connected component of $U$ at a time.  Choose a
closed Riemannian extension $(\widehat M,\widehat g)$ for which
$\widehat M\setminus\overline U$ is connected.  The spaces
$\widehat M$, $U$, and $\widehat M\setminus\overline U$, with their
induced metrics and volume measures, are doubling and support weak
$(1,1)$-Poincar\'e inequalities.  The two smooth domains also satisfy the
density and Ahlfors boundary-regularity hypotheses of
\cite[Theorem~2.4]{GM}.  These facts follow by covering their compact
closures with finitely many smooth boundary charts.

The abstract tangent module embeds isometrically (denoted by $\iota$) in the measurable sections
of $T\widehat M$ \cite[Theorem~4.7]{LucicPasqualetto2020}.  If $X$ is a
module vector field and $\iota X$ is its image, then for every smooth
function $\phi$,
$$
  d\phi(X)=\langle\nabla\phi,\iota X\rangle.
$$
Testing the definition of weak divergence against smooth functions
therefore identifies it with Riemannian distributional divergence.
Substitution in the definition of the pairing in
\cite[Section~2.2 and Section~2.3]{GM} identifies that pairing, in each smooth chart,
with the Anzellotti pairing.  Moreover, $|D\chi_U|=dA$.

Consequently, \cite[Definition~4.2 and Theorem~4.3]{GM} characterizes
$p\in\partial_{L^2}\mathcal T_f(u)$ by a field $z$ satisfying
\begin{align*}
  |z|\le1,
  \qquad p=-\diver z\in L^2(U),\\
  (z,Du)=|Du|\qquad\text{ as Radon measures on }U,\\
 (z\mathbin\cdot\nu_U)^-\in\sign(Tu-f) \qquad\text{ a.e. on }\partial U.
\end{align*}
G\'orny--Maz\'on use the interior normal trace, whereas the outward trace used
here satisfies
$$
  [z,\nu_U]=-(z\mathbin\cdot\nu_U)^-.
$$
Since $\sign(-a)=-\sign(a)$, their boundary condition is equivalent to
$[z,\nu_U]\in\sign(f-Tu)$.  This proves the stated equivalence. For a
disconnected domain, we can apply the result to each component, and combine the resulting fields.
\end{proof}

\begin{lemma}
\label{lem:saturated-trace}
Let $z\in L^\infty(TU)$ with $\diver z\in L^1(U)$, and let
$E\subset U$ have finite perimeter. If
\[
  (z,D\chi_E)=|D\chi_E|
\]
as Radon measures in $U^\circ$, then, for
$|D\chi_E|$-almost every point of $\partial^*E\cap U^\circ$,
the two one-sided traces of $z$ agree, and their common
outward trace satisfies
\[
  [z,\nu_E]=-1.
\]
\end{lemma}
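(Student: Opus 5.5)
We argue locally near $|D\chi_E|$-almost every point of $\partial^*E\cap U^\circ$, combining the Anzellotti theory of pairings for bounded divergence-measure fields with blow-up at points of the reduced boundary. The basic tool is the structure theorem for the pairing: when $\diver z\in L^1$, the measure $(z,D\chi_E)$ is absolutely continuous with respect to $|D\chi_E|$. Its density $\theta(z,D\chi_E;x)$ coincides $|D\chi_E|$-a.e. with a normal trace of $z$ on $\partial^*E$ (Anzellotti; Chen--Torres--Ziemer in the Euclidean case, transferred to the Riemannian setting through smooth charts exactly as in the proof of \cref{thm:dirichlet-tv}). So the hypothesis $(z,D\chi_E)=|D\chi_E|$ says precisely that $\theta=1$ $|D\chi_E|$-a.e.

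Step 1: the two one-sided traces agree. For a divergence-measure field, the jump of the normal traces across a rectifiable set $\Gamma$ equals the density of $\diver z$ with respect to $\mathcal H^{n-1}\llcorner\Gamma$. Since $\diver z\in L^1(U)$ is absolutely continuous with respect to volume, it gives no mass to $\partial^*E$, which is $\mathcal H^{n-1}$-$\sigma$-finite and of volume zero. Hence the interior and exterior traces coincide $\mathcal H^{n-1}$-a.e. on $\partial^*E\cap U^\circ$. I would prove this by applying the Gauss--Green formula for divergence-measure fields to $E$ and to its complement, localized by test functions $\varphi$, and then subtracting the two identities.

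Step 2: identify the sign. With our convention $D\chi_E=-\nu_E|D\chi_E|$, where $\nu_E$ is the measure-theoretic outward normal, the pairing formula in \cref{thm:dirichlet-tv} gives, for $\varphi\in C_c^\infty(U^\circ)$,
\[
\int\varphi\,d(z,D\chi_E)=-\int_E\bigl(\langle z,\nabla\varphi\rangle+\varphi\diver z\bigr)\,dV=-\int_{\partial^*E}\varphi\,[z,\nu_E]\,dA .
\]
The last equality is the Gauss--Green formula for $E$ with the (common) normal trace. Comparing this with $\int\varphi\,d|D\chi_E|=\int_{\partial^*E}\varphi\,dA$ for all $\varphi$ yields $[z,\nu_E]=-1$ a.e. on $\partial^*E\cap U^\circ$.

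The main obstacle is rigor in Step 1 and in the Gauss--Green formula with a trace on $\partial^*E$, since $E$ is only of finite perimeter. I would first invoke the Chen--Torres--Ziemer Gauss--Green theorem for sets of finite perimeter, which produces interior and exterior normal traces in $L^\infty(\partial^*E)$ bounded by $\|z\|_\infty$. If that is awkward, a fallback is blow-up: at a point of $\partial^*E$ that is also a Lebesgue point of the trace, rescale so that $E$ converges to a half-space and $z$ converges weakly-$*$. The rescaled divergence tends to $0$ in $L^1$ because $\diver z\in L^1$. The pairing then reads off the constant $-1$ from the half-space computation.
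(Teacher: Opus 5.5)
Your proposal is correct and follows essentially the same route as the paper. The paper cites Crasta--De Cicco (Remark~4.1) for the agreement of the two one-sided traces when $\diver z\in L^1$, and (Theorem~3.3) for identifying the density of the pairing with the normal trace; it then uses $D\chi_E=-\nu_E|D\chi_E|$ to read off $[z,\nu_E]=-1$, exactly as in your Steps~1--2. Your derivation of that density from the Gauss--Green formula for sets of finite perimeter simply unpacks the cited structure theorem, and the blow-up fallback is not needed.
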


\begin{proof}
By \cite[Remark~4.1]{CrastaDeCicco}, applied in smooth charts, the two
one-sided traces agree because $\diver z\in L^1(U)$ is absolutely
continuous with respect to volume.  By
\cite[Theorem~3.3]{CrastaDeCicco}, the density of the pairing is the average
of these traces.  Moreover,
$$
  D\chi_E=-\nu_E|D\chi_E|.
$$
Thus the density of the pairing is the common trace in the direction
$-\nu_E$.  Pairing saturation makes that density equal to $1$; equivalently,
the common trace in the outward direction $\nu_E$ is $-1$.
\end{proof}

\end{document}